\patchcmd{\@setauthors}{\MakeUppercase}{\large}{}{} % override all-cap author names of AMS-Art
\numberwithin{equation}{section}
\numberwithin{figure}{section}
\theoremstyle{plain}
\newtheorem{thm}{\protect\theoremname}
  \theoremstyle{remark}
  \newtheorem{rem}[thm]{\protect\remarkname}
  \theoremstyle{plain}
  \newtheorem{lem}[thm]{\protect\lemmaname}
\providecommand{\lemmaname}{Lemma}
\providecommand{\remarkname}{Remark}
\providecommand{\theoremname}{Theorem}
\begin{document}

\title{Application of the cut-off projection to solve a backward heat conduction problem in a two-slab composite system}
\author{Nguyen Huy Tuan}

\address{Institute for Computational Science and Technology, Ho Chi Minh city, Vietnam}

\email{thnguyen2683@gmail.com}
\author{Vo Anh Khoa$^\dagger$}

\thanks{$^\dagger$ Corresponding author}

\address{Mathematics and Computer Science Division, Gran Sasso Science Institute,
L'Aquila, Italy}
\address{Institute for Numerical and Applied Mathematics, University of Goettingen, Germany}

\email{vakhoa.hcmus@gmail.com}

\author{Mai Thanh Nhat Truong}

\address{Department of Electrical, Electronic, and Control Engineering, Hankyong
National University, Gyeonggi, Republic of Korea}

\email{mtntruong@toantin.org}

\author{Tran The Hung}

\address{Department of Mathematics, University of Hamburg, Hamburg, Germany}

\email{thehung.tran@mathmods.eu}

\author{Mach Nguyet Minh}

\address{Department of Mathematics and Statistics, University of Helsinki,
Finland}

\email{minh.mach@helsinki.fi}

\subjclass[2000]{35K05, 42A20, 47A52, 65J20}

\keywords{Two-slab system, Cut-off projection, Backward heat conduction problem, Ill-posedness,
Regularized solution, Error estimates}
\begin{abstract}
The main goal of this paper is applying the cut-off projection for solving
one-dimensional backward heat conduction problem in a two-slab
system with a perfect contact. In a constructive manner, we commence
by demonstrating the Fourier-based solution that contains the drastic
growth due to the high-frequency nature of the Fourier series. Such instability leads to the need of studying the projection method
where the cut-off approach is derived consistently. In the theoretical
framework, the first two objectives are to construct the regularized
problem and prove its stability for each noise level. Our second interest
is estimating the error in $L^2$-norm. Another supplementary
objective is computing
the eigen-elements. All in all, this paper can be considered as a preliminary attempt to solve
the heating/cooling of a two-slab composite system backward in time. Several numerical tests are provided
to corroborate the qualitative analysis.
\end{abstract}

\maketitle

\section{Introduction}

The backward-in-time heat conduction problem has been scrutinized for a long time by many approaches with fast-adapting and effectively distinct
strategies. This problem plays a critical role in different fields
of study, for example, enabling us to determine the initial temperature of a volcano
when the eruption has already started, or recover noisy, blurry digital images acquired
by camera sensors.

The need for finding numerical solutions of various kinds of such equations is originated from its natural instability due to the drastic
growth of the solutions. Once inaccurate final data are applied to the
partial differential equation (PDE), one could obtain a totally wrong solution even if the final data are measured
as accurately as possible. In fact, the
measured data in real-life applications are always inaccurate. Thus, studies on regularization methods have attracted a considerable amount of attention, where stable approximate solutions
were developed year by year. Several comprehensive materials
are given in the textbook of Latt\`es and Lions \cite{LL67}, and papers such as \cite{BC73,CO94,DB05,HDS08,Sei96,TT08,LEI98,JLR12,HDL10,Lesnic2002}.

In this paper, we consider the heat transfer problem in a two slabs composite
$\left(-b<x<0\right)$ and $\left(0<x<a\right)$. We also follow the notations in \cite{MV10}.
In particular, the problem, in which the time interval is defined by
$J:=\left[t_{0},t_{f}\right]$ for $t_{0}\ge0,t_{f}>0$, can be expressed
as:
\begin{equation}
\rho_{b}c_{b}\frac{\partial T_{b}}{\partial t}=K_{b}\frac{\partial^{2}T_{b}}{\partial x^{2}},\quad-b<x<0,\ t\in J,\label{eq:pt1}
\end{equation}
\begin{equation}
\rho_{a}c_{a}\frac{\partial T_{a}}{\partial t}=K_{a}\frac{\partial^{2}T_{a}}{\partial x^{2}},\quad0<x<a,\ t\in J.\label{eq:pt2}
\end{equation}

The boundary conditions are:
\begin{equation}
\frac{\partial T_{b}}{\partial x}=0\quad\mbox{at}\;x=-b,\quad\frac{\partial T_{a}}{\partial x}=0\quad\mbox{at}\;x=a, \ t\in J\label{eq:bien1}
\end{equation}
\begin{equation}
T_{b}=T_{a},\quad K_{b}\frac{\partial T_{b}}{\partial x}=K_{a}\frac{\partial T_{a}}{\partial x}\quad\mbox{at}\;x=0, \ t\in J.\label{eq:bien2}
\end{equation}

The model \eqref{eq:pt1}-\eqref{eq:bien2} involves a number
of dimensionless quantities, namely the temperature $T_{\alpha}$ ($^{\circ}\text{C}$), mass density $\rho_{\alpha}$ (g\slash$\text{cm}^{3}$), specific heat $c_{\alpha}$ (cal\slash$(\text{g} \cdot ^{\circ}\text{C})$), and thermal conductivity $K_{\alpha}$ (cal\slash$(\text{cm}\cdot\text{sec}\cdot ^{\circ}\text{C})$). Here the index $\alpha$ indicates either material $a$ or $b$, i.e. $\alpha \in \{a,b\}$. 
%In principle, for higher dimensions the component materials are assumed to be non-homogeneous and anisotropic whilst the conductivity is usually a symmetric and positive-definite tensor that depends on the spatial variable. 
The perfect contact in \eqref{eq:bien2} indicates that the temperature and heat flux are continuous at the interface.

The model \eqref{eq:pt1}-\eqref{eq:bien2} is supplemented with the terminal
measured data $T_{\alpha}^{\varepsilon}\left(x,t_{f}\right)$ which
satisfy:
\begin{equation}
\left\Vert T_{a}\left(\cdot,t_{f}\right)-T_{a}^{\varepsilon}\left(\cdot,t_{f}\right)\right\Vert _{L^{2}\left(0,a\right)}+\left\Vert T_{b}\left(\cdot,t_{f}\right)-T_{b}^{\varepsilon}\left(\cdot,t_{f}\right)\right\Vert _{L^{2}\left(-b,0\right)}\le\varepsilon,\label{terminalcondition}
\end{equation}
where $0<\varepsilon\ll1$ represents  the maximum bound
of errors arising in measurement. Henceforward, our task is to identify
the initial value $T_{\alpha}^{\varepsilon}\left(x,t_{0}\right), -b\leq x \leq a$.

Our main objective is to prove the ill-posedness of the aforementioned
model and then deal with it by using the cut-off projection method. Historically, there
have been several works that treat ill-posed problems using this method. For instances, Daripa and his co-workers in \cite{Ternat12,Ternat11}
contributed significantly. In particular, the cut-off method \cite{Ternat12}
was used to control the spurious effects on the solution due to short
wave components of the round-off and truncation errors. Recently, the cut-off method was utilized by Regi\'nska
and Regi\'nski \cite{Regin06} and Tuan et al. \cite{TKMT17} to stabilize the $\cosh\left(x\right)$-
and $\sinh\left(x\right)/x$-like growth in a Cauchy problem for the
3D Helmholtz equation. 

%Besides, the other filtering functions and different choices of regularization parameter were generally introduced by Daripa et al. \cite{Daripa98}.  

%Since 2010, much effort has been established by Nam et al. \cite{Nam10,NTT10} to develop its use towards various backward heat equations. For the nonlinear parabolic problem, his method is mainly based on a local-in-time recurrence, i.e. divide the problem into many sub-problems corresponding to a suitable sub-interval of time. The idea in this context is to construct the approximate solution from which the high frequency is effectively eliminated at some appropriate point. Obviously, such elimination would strongly depend on the noise level. On the other side, an important aspect of this field is that we are capable to prove the well-posedness of the regularized problem.

%The above-mentioned papers have had a great influence on computational methods for PDEs, and the cut-off method would be useful for the 2D and 3D problems, which evidences significantly its tremendous potential towards real-world applications. This paper can be thought of as the starting point of the method for solving the two-slab system backward in time. As a preliminary attempt, the one-dimensional case would of a cogent reason to be studied.

This paper is fivefold. Section \ref{sec:2} is devoted to the eigen-elements
and ill-posedness of the model. We compute the approximate
eigenvalues and eigenfunctions, along with providing the temperature solution by Fourier-based
modes. In Section \ref{sec:3}, the cut-off method is described by providing
 the regularized problem corresponding
to the terminal measured data $T_{\alpha}^{\varepsilon}\left(x,t_{f}\right)$. We also show the error estimates which clearly imply the stability of the regularized solution at
each noise level. Lastly, some numerical tests are given in Section
\ref{sec:4}, whilst Section \ref{sec:5} is dedicated to a few conclusions
and pointing out problems for possible further development. 

\section{Eigen-elements\label{sec:2}}

In the context of Fourier representations, the eigen-elements
including the eigenvalues and eigenfunctions can be obtained by solving
the Sturm\textendash Liouville problem. Such problem would
be of the form:
\begin{equation}
\phi_{\alpha}''+\lambda_{\alpha}^{2}\phi_{\alpha}=0\quad\mbox{for}\;\alpha\in\left\{ a,b\right\} ,\label{eq:SLprobl}
\end{equation}
associated with the boundary conditions \eqref{eq:bien1} and \eqref{eq:bien2}.
After some straightforward calculations and using \eqref{eq:bien1}
one has
\begin{equation}
\phi_{b} (x)=\theta_{b}\cos\left(\lambda_{b}\left(x+b\right)\right),\quad\phi_{a} (x)=\theta_{a}\cos\left(\lambda_{a}\left(x-a\right)\right),\label{eq:expression}
\end{equation}
where $\theta_a,\theta_b$ are constants.

Applying the transmission conditions \eqref{eq:bien2} at the interface $x=0$,
we arrive at
\begin{equation}
\theta_{b}\cos\left(\lambda_{b}b\right)-\theta_{a}\cos\left(\lambda_{a}a\right)=0,\label{eq:in1}
\end{equation}
\begin{equation}
\theta_{b}K_{b}\lambda_{b}\sin\left(\lambda_{b}b\right)+\theta_{a}K_{a}\lambda_{a}\sin\left(\lambda_{a}a\right)=0,\label{eq:in2}
\end{equation}
which lead to the transcendental equation for eigenvalues:
\begin{equation}
K_{b}\lambda_{b}\sin\left(\lambda_{b}b\right)\cos\left(\lambda_{a}a\right)+K_{a}\lambda_{a}\sin\left(\lambda_{a}a\right)\cos\left(\lambda_{b}b\right)=0.\label{eq:canon}
\end{equation}

For brevity, we denote the thermal diffusivities ($\text{cm}^{2}\slash\text{sec}$) by
\[
\kappa_{\alpha}=\frac{K_{\alpha}}{\rho_{\alpha}c_{\alpha}}\quad\mbox{for}\;\alpha\in\left\{ a,b\right\} .
\]
Since we need the same time dependence in both slabs, the condition
\begin{equation}
\kappa_{b}\lambda_{b}^{2}=\kappa_{a}\lambda_{a}^{2}\label{eq:con1}
\end{equation}
 is required. Hereby, the above equation enables us to derive
the new eigenvalue condition from \eqref{eq:canon}:
\begin{equation}
\frac{K_{b}}{\sqrt{\kappa_{b}}}\sin\left(\lambda_{b}b\right)\cos\left(\lambda_{a}a\right)+\frac{K_{a}}{\sqrt{\kappa_{a}}}\sin\left(\lambda_{a}a\right)\cos\left(\lambda_{b}b\right)=0.\label{eq:canon1}
\end{equation}

To sum up, the two transcendental equations \eqref{eq:con1} and \eqref{eq:canon1}
allow us to compute the eigenvalues $\lambda_{an},\lambda_{bn}$ for $n\in\mathbb{N}$.

\subsection*{A computational approach}

The equations \eqref{eq:con1} and \eqref{eq:canon1} spontaneously generate a
 system of nonlinear algebraic equations with $2N$ variables $\lambda_{bn}$
and $\lambda_{an}$ for $n\in\left\{ 0,...,N\right\} $. From the
numerical point of view, the common way is to use the Newton method
which is known as the one-step iterative scheme with quadratic convergence
\cite{OR70} for solving equations of the form $\mbox{F}\left(\mbox{x}\right)=0$.
Besides, so far there have been several effective approaches such as the
Newton-Simpson's method of \cite{CT07} and the improved Newton's method by the Closed-Open quadrature formula of \cite{NW09}. However, there would
be some basic impediments if we exploit such approaches. Noticeably,
the transcendental system \eqref{eq:con1} and \eqref{eq:canon1} has
infinitely many roots and furthermore, we cannot ensure that the obtained solutions correspond
one-to-one to the indexes $n$, which means that the result may be missed
out or exceeded. Thus, by varying initial guesses $\mbox{x}_{0}$, 
it would produce a few confusing results. For instance, taking $K_{\alpha}=\kappa_{\alpha}=1$,
we easily obtain the family of solutions
\begin{equation}
\lambda_{\alpha k}=\frac{k\pi}{a+b}\quad\mbox{for}\;k\in\mathbb{N},\label{eq:abc-1}
\end{equation}
then with $a=3,b=5$ and the two initial points
\[
\mbox{x}_{0}:=\begin{bmatrix}0 & 1 & \cdots & N-1 & N\\
0 & 1 & \cdots & N-1 & N
\end{bmatrix}^{\mbox{T}}\;\mbox{and}\;\mbox{x}_{1}:=0.5\mbox{x}_{0},
\]
the eigenvalues governed by the standard Newton method (for $N=50$)
are showed in Figure \ref{fig:fsolve1} for comparison. It is apparent that there are many differences between the two figures. Basically,
the actual final eigenvalue with $N=50$ is approximately $19.63495$
from \eqref{eq:abc-1}, while the others are, respectively, $49.87278$
and $26.70353$ in Figure \ref{fig:fsolve1}. The results also indicate that
with the guess $\mbox{x}_{0}$ there are 48 admissible values
obtained by the elimination of equal values whilst there are only 42 values for the
guess $\mbox{x}_{1}$.

In order to overcome this issue, we apply the built-in function \texttt{fzero} of MATLAB to several different initial points over some pre-defined
range. Thanks to the structure of the transcendental equations \eqref{eq:con1} and \eqref{eq:canon1}, we
are capable of reducing the system to the following equation:
\begin{eqnarray}\label{eqn:lambda_b}
\frac{K_{b}}{\sqrt{\kappa_{b}}}\sin\left(\lambda_{b}b\right)\cos\left(\sqrt{\frac{\kappa_{b}}{\kappa_{a}}}\lambda_{b}a\right)+\frac{K_{a}}{\sqrt{\kappa_{a}}}\sin\left(\sqrt{\frac{\kappa_{b}}{\kappa_{a}}}\lambda_{b}a\right)\cos\left(\lambda_{b}b\right)=0,
\end{eqnarray}
where we have derived from \eqref{eq:con1} that
\begin{eqnarray}\label{eqn:lambda_a}
\lambda_{a}=\sqrt{\frac{\kappa_{b}}{\kappa_{a}}}\lambda_{b}
\end{eqnarray}
and substituted this to \eqref{eq:canon1}. Once $\lambda_{b}$ is found, it is straightforward to compute the corresponding
$\lambda_{a}$ from \eqref{eqn:lambda_a}. The algorithm to compute the first $N$ non-negative eigenvalues $\lambda_{bn}$ from \eqref{eqn:lambda_b} is described as follows:

\begin{algorithm}[H]
\KwData{function $F$, interval $[0,d]$, number of the eigenvalues $N$, small radius $\delta$}
\KwResult{$N+1$ roots of $F$}
found\_roots $\gets\emptyset$\;
output\_roots $\gets\emptyset$\;
\While{size(found\_roots) $< N + 1$}{
found\_roots $\gets$ all roots of $F$ on $[0,d]$\;
\eIf{size(found\_roots) $< N + 1$}{
$d \gets d + \delta$\;
}
{output\_roots $\gets$ first $N + 1$ roots from found\_roots\;
}}
return output\_roots
\caption{Compute the first $N$ non-negative eigenvalues.}
\end{algorithm}

\begin{figure}
\begin{centering}
\includegraphics[scale=0.8]{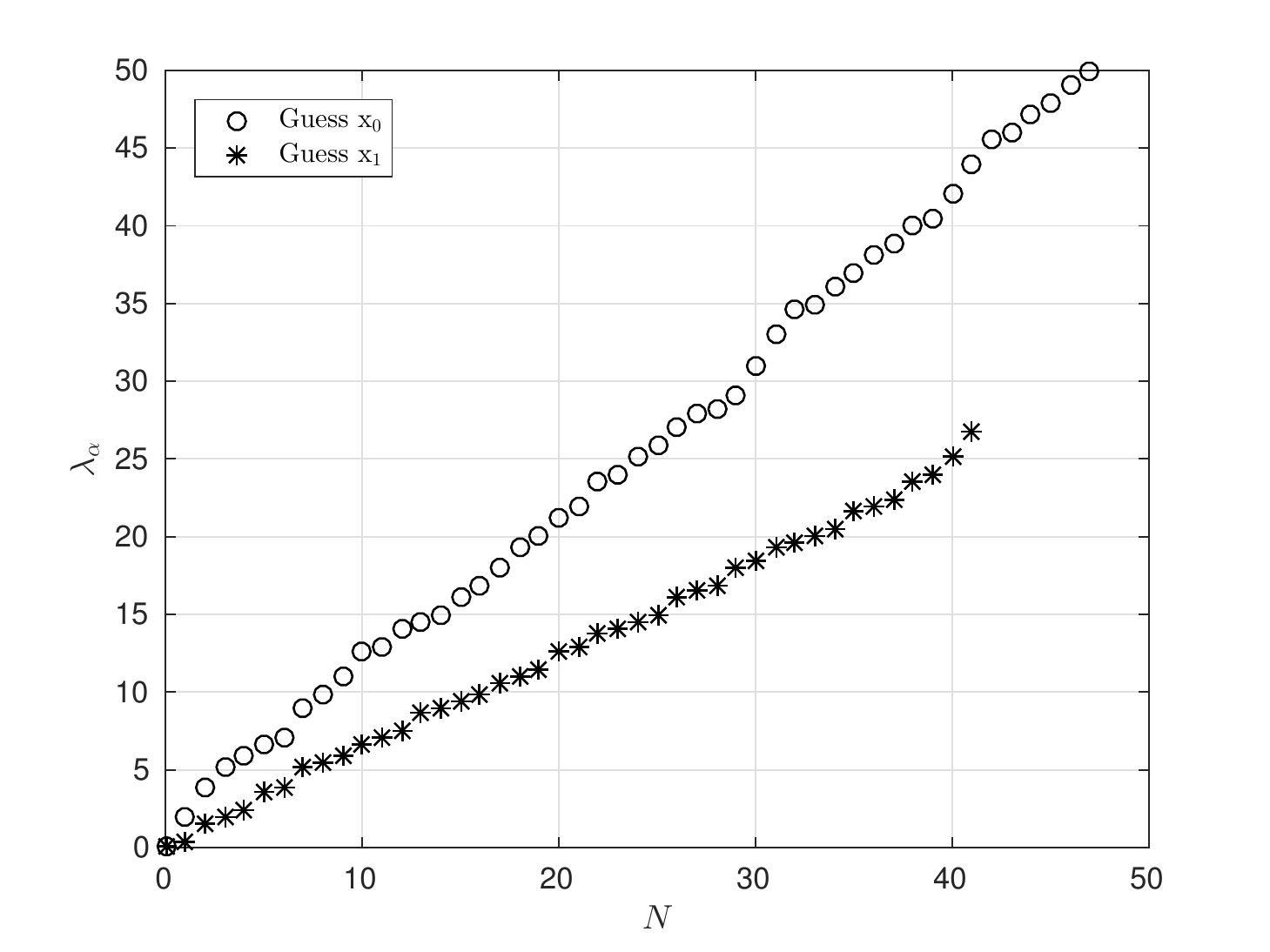}
\par\end{centering}
\caption{Numerical eigenvalues for the explicit
case \eqref{eq:abc-1} using the standard Newton method with different initial guesses $\mbox{x}_{0}$
and $\mbox{x}_{1}$ at the $10$th iteration and with the stopping
constant $\delta=10^{-10}$.\label{fig:fsolve1}}
\end{figure}

\begin{figure}
\begin{raggedright}
\subfloat[The explicit
case \eqref{eq:abc-1}\label{fig:fzero}]{\begin{centering}
\includegraphics[scale=0.6]{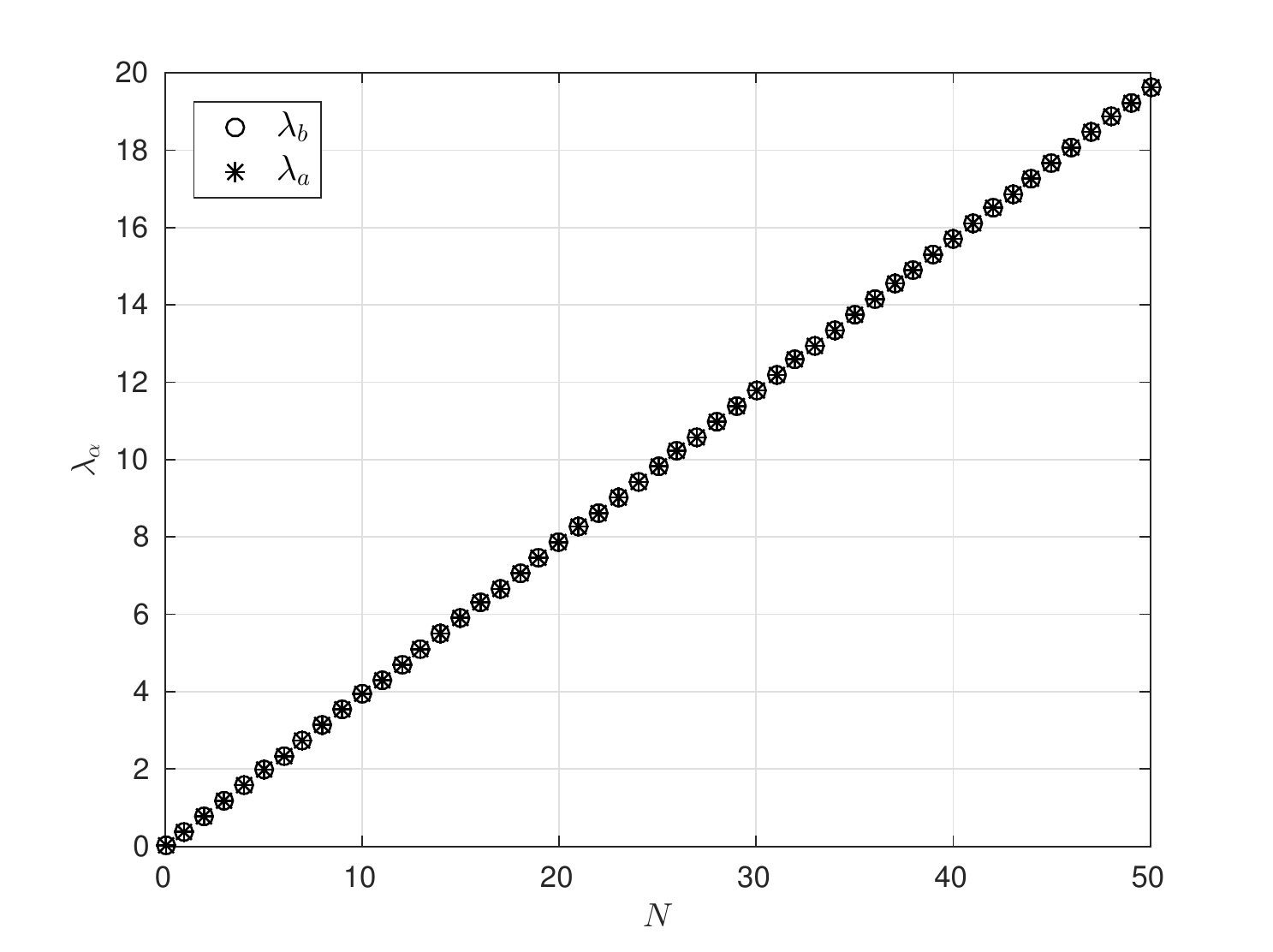}
\par\end{centering}
}\subfloat[The real
example in Remark \ref{rem:1}\label{fig:fzero1}]{\begin{centering}
\includegraphics[scale=0.6]{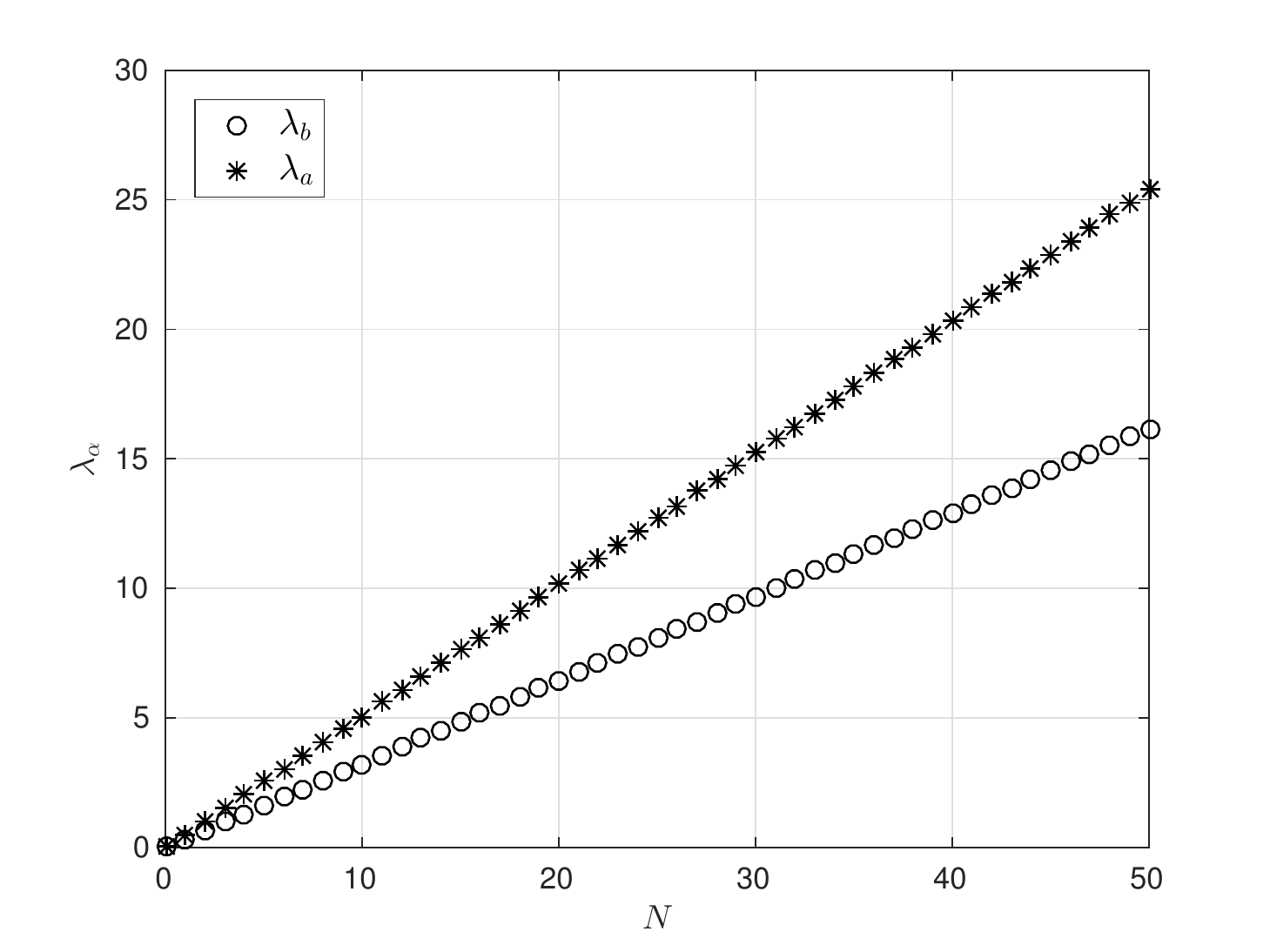}
\par\end{centering}
}
\par\end{raggedright}
\caption{Numerical eigenvalues using the MATLAB-built-in function \texttt{fzero}.\label{fig:2.2}}
\end{figure}

\begin{rem}
\label{rem:1}A complicated case is provided in Figure \ref{fig:fzero1}
where the eigenvalues cannot be solved explicitly. The materials we
illustrate are the copper versus the molybdeum, which was experienced
by Josell et al. \cite{Josell97}, providing $\kappa_{\alpha}\in\left\{ 0.838,0.339\right\} $
and $K_{\alpha}\in\left\{ 3.42,1.05\right\} $, respectively. Here,
this case is governed by the same parameters of the previous explicit
case \eqref{eq:abc-1}, i.e. $N=50,a=3,b=5$, with $d=10$ and $\delta = 10^{-3}$. By comparing Figures \ref{fig:fsolve1} and \ref{fig:2.2}, it is easy
to see the differences and how well the simple algorithm works. Especially,
the computed final numerical eigenvalue, i.e. $\lambda_{b50}=19.63495$ in Figure \ref{fig:fzero},
is very closed to the exact value. The only disadvantage of this approach
is the long execution time that may rise in higher dimensions.
\end{rem}
Now, let us consider the eigenfunctions $\phi_{\alpha}$.
By choosing appropriate constants $\theta_a$ and $\theta_b$ in \eqref{eq:expression}, we obtain the following eigenfunctions:
\begin{equation}
\tilde{\phi}_{n} (x)=\phi_{bn} (x)=\frac{\cos\left(\lambda_{bn}\left(x+b\right)\right)}{\cos\left(\lambda_{bn}b\right)}\quad\mbox{for the left slab}\;\left(-b<x<0\right),\label{eq:eigen1}
\end{equation}
and
\begin{equation}
\tilde{\phi}_{n} (x)=\phi_{an} (x)=\frac{\cos\left(\lambda_{an}\left(x-a\right)\right)}{\cos\left(\lambda_{an}a\right)}\quad\mbox{for the right slab}\;\left(0<x<a\right).\label{eq:eigen2}
\end{equation}

\begin{rem}
\label{rem:orth}For different indexes $m\ne n$, the eigenfunctions
$\tilde{\phi}_{m}$ and $\tilde{\phi}_{n}$ are orthogonal in the sense that
\begin{equation}
\frac{K_{b}}{\kappa_{b}}\left\langle \tilde{\phi}_{m},\tilde{\phi}_{n}\right\rangle _{L^{2}\left(-b,0\right)}+\frac{K_{a}}{\kappa_{a}}\left\langle \tilde{\phi}_{m},\tilde{\phi}_{n}\right\rangle _{L^{2}\left(0,a\right)}=0. \label{eq:2.13}
\end{equation}

On the other hand, we also obtain the same equality for the derivatives
of such eigenfunctions. In fact, these functions are orthogonal in the sense that
\[
K_{b}\left\langle \tilde{\phi}^{'}_{m},\tilde{\phi}^{'}_{n}\right\rangle _{L^{2}\left(-b,0\right)}+K_{a}\left\langle \tilde{\phi}^{'}_{m},\tilde{\phi}^{'}_{n}\right\rangle _{L^{2}\left(0,a\right)}=0.
\]

Let us denote
\begin{eqnarray*}
\mathcal{N}_{n} & := & \frac{K_{b}}{\kappa_{b}}\left\Vert \tilde{\phi}_{n}\right\Vert _{L^{2}\left(-b,0\right)}^{2}+\frac{K_{a}}{\kappa_{a}}\left\Vert \tilde{\phi}_{n}\right\Vert _{L^{2}\left(0,a\right)}^{2}  = \frac{bK_{b}}{2\kappa_{b}}\frac{1}{\cos^{2}\left(\lambda_{bn}b\right)}+\frac{aK_{a}}{2\kappa_{a}}\frac{1}{\cos^{2}\left(\lambda_{an}a\right)}\quad\mbox{for}\;n\in \mathbb{N}^{*},
\end{eqnarray*}
\begin{align*}
\mathcal{M}_{n} & :=K_{b}\left\Vert \tilde{\phi}^{'}_{n}\right\Vert _{L^{2}\left(-b,0\right)}^{2}+K_{a}\left\Vert \tilde{\phi}^{'}_{n}\right\Vert _{L^{2}\left(0,a\right)}^{2} =\frac{bK_{b}}{2}\frac{\lambda_{bn}^{2}}{\cos^{2}\left(\lambda_{bn}b\right)}+\frac{aK_{a}}{2}\frac{\lambda_{an}^{2}}{\cos^{2}\left(\lambda_{an}a\right)}\quad\mbox{for}\;n\in \mathbb{N}^{*},
\end{align*}
with
\[
\mathcal{N}_{0}:=\frac{K_{b}}{\kappa_{b}}\left\Vert \tilde{\phi}_{0}\right\Vert _{L^{2}\left(-b,0\right)}^{2}+\frac{K_{a}}{\kappa_{a}}\left\Vert \tilde{\phi}_{0}\right\Vert _{L^{2}\left(0,a\right)}^{2}=\frac{bK_{b}}{\kappa_{b}}+\frac{aK_{a}}{\kappa_{a}},
\]

and
\[
\mathcal{M}_{0}:=K_{b}\left\Vert \tilde{\phi}^{'}_{0}\right\Vert _{L^{2}\left(-b,0\right)}^{2}+K_{a}\left\Vert \tilde{\phi}^{'}_{0}\right\Vert _{L^{2}\left(0,a\right)}^{2}=0.
\]
\end{rem}

\section{Main results}\label{sec:3}

\subsection{Solution by Fourier-mode and Instability}\label{subsec:3-1}

From the transcendental equations \eqref{eq:con1}-\eqref{eq:canon1}
and the formulation of the eigenfunctions in \eqref{eq:eigen1}-\eqref{eq:eigen2},
we are competent to construct the temperature solution throughout
the two-slab system, denoted by $T\left(x,t\right)$. Basically, the
Fourier-mode for the solution of this system cannot be obtained in
the same manner with the classical backward heat equation due to the
non-trivial orthogonality (showed in Remark \ref{rem:orth}) in which
we cannot make equivalent computations. Nevertheless, we are still
able to compute it by following the structural property of the
solution in the classical case.

By assigning $\bar{\lambda}_{n}:=\kappa_{b}\lambda_{bn}^{2}=\kappa_{a}\lambda_{an}^{2}$,
the temperature solution (in the finite series truncated computational sense) would be of
the following form:
\begin{equation}
T_N \left(x,t\right)=\sum_{n=0}^{N} C_{n}e^{\bar{\lambda}_{n}\left(t_{f}-t\right)}\tilde{\phi}_{n}\left(x\right),\quad\left(x,t\right)\in\left[-b,a\right]\times\left[t_{0},t_{f}\right],\label{eq:tempsol}
\end{equation}
for some $N\in\mathbb{N}$ and the data associated with the final
state of temperature is provided by
\begin{equation}
T_N \left(x,t_{f}\right)=\sum_{n=0}^{N}C_{n}\tilde{\phi}_{n}\left(x\right).\label{finalstate}
\end{equation}

Before investigating the instability of the solution as well as the
cut-off projection, our first concern is to determine the Fourier-type
coefficients $C_{n}$ in \eqref{eq:tempsol} for a fixed
$N\in\mathbb{N}$. It is worth noticing that due to the two-slab system
being considered, such coefficients vary on each material, hence we
denote by $C_{\alpha n}$ for $\alpha\in\left\{ a,b\right\} $
to distinguish those different values.

According to the representation \eqref{finalstate}, the algorithm
to compute the coefficient $C_{\alpha n}$ is provided as
follows: Let $J_{\alpha}$ be the number of discrete nodes in
the material $\alpha$. Suppose these are measured 
at the time $t_{f}$, producing a set $\left\{ T_{\alpha}^{\varepsilon}\left(x_{j},t_{f}\right)\right\} _{j=\overline{0,J_{\alpha}}}$.
Then the choice of a suitable $J_{\alpha}$ will be discussed later.
We are led to the matrix-formed system $\mathbb{A}\mathbb{X}=\mathbb{B}$
where $\mathbb{A}\in\mathbb{R}^{J_{\alpha}+1}\times\mathbb{R}^{N+1}$,
$\mathbb{X}\in\mathbb{R}^{N+1}$, $\mathbb{B}\in\mathbb{R}^{J_{\alpha}+1}$
are expressed by
\begin{equation}
\mathbb{A}=\begin{bmatrix}\phi_{\alpha0}\left(x_{0}\right) & \phi_{\alpha1}\left(x_{0}\right) & \cdots & \phi_{\alpha N}\left(x_{0}\right)\\
\phi_{\alpha0}\left(x_{1}\right) & \phi_{\alpha1}\left(x_{1}\right) & \cdots & \phi_{\alpha N}\left(x_{1}\right)\\
\vdots & \vdots & \ddots & \vdots\\
\phi_{\alpha0}\left(x_{J_{\alpha}}\right) & \phi_{\alpha1}\left(x_{J_{\alpha}}\right) & \cdots & \phi_{\alpha N}\left(x_{J_{\alpha}}\right)
\end{bmatrix},\;\mathbb{X}=\begin{bmatrix}C_{\alpha0}\\
C_{\alpha1}\\
\vdots\\
C_{\alpha N}
\end{bmatrix},\;\mathbb{B}=\begin{bmatrix}T_{\alpha}^{\varepsilon}\left(x_{0},t_{f}\right)\\
T_{\alpha}^{\varepsilon}\left(x_{1},t_{f}\right)\\
\vdots\\
T_{\alpha}^{\varepsilon}\left(x_{J_{\alpha}},t_{f}\right)
\end{bmatrix}.
\label{mainsystem}
\end{equation}

\begin{rem}
Multiplying the expression \eqref{finalstate} by the $n$-th eigenfunction
$\tilde{\phi}_{n}\left(x\right)$ using the inner product in $L^{2}\left(-b,0\right)$
with the weight $K_{b}/\kappa_{b}$, then do the same using the inner product in
$L^{2}\left(0,a\right)$, we obtain
\begin{equation}
\vartheta_{b}C_{bn}+\vartheta_{a} C_{an}=\mathcal{N}_{n}^{-1}\left(\vartheta_{b}\frac{K_{b}}{\kappa_{b}}\left\langle T_{b}\left(\cdot,t_{f}\right),\phi_{bn}\right\rangle _{L^{2}\left(-b,0\right)}+\vartheta_{a}\frac{K_{a}}{\kappa_{a}}\left\langle T_{a}\left(\cdot,t_{f}\right),\phi_{an}\right\rangle _{L^{2}\left(0,a\right)}\right)\; \textrm{for }\vartheta_{\alpha}\in\mathbb{R}, \label{eq:C+C}
\end{equation}
which also guarantees that $\left|C_{\alpha n}\right|<\infty$ for
all $n\in\mathbb{N}$.
\end{rem}
The second objective concerns the instability of the temperature in this
model manifested by the expression \eqref{eq:tempsol}.
Due to the intrinsic property of the eigenvalues $\bar{\lambda}_{n}$
which reads
\[
0\le\bar{\lambda}_{1}<\bar{\lambda}_{2}<...<\bar{\lambda}_{n}<...<\lim_{n\to\infty}\bar{\lambda}_{n}=\infty,
\]
the rapid escalation is indeed catastrophic and it would be the same
issue that commonly occurs in previous works (e.g. \cite{NTT10,Regin06,TAKL2017,MEIL01,JLR11}).
Note that even if the coefficients $C_{n}$ may decrease quickly,
a large error between the Fourier-based solution \eqref{eq:tempsol}
and the exact solution always happens. Thus, any computational method
are impossible to be applied. To be precise, we give the proof of the
following theorem that the temperature through the system is exponentially
unstable.
\begin{thm}
The truncated approximate solution $T_N$ is exponentially
unstable over the $L^{2}$-norm by the following estimate:
\begin{equation}
\left\Vert T_N \left(\cdot,t\right)\right\Vert _{L^{2}\left(-b,a\right)}^{2}\geq\left(\max\left\{ \dfrac{K_{a}}{\kappa_{a}},\dfrac{K_{b}}{\kappa_{b}},1\right\} \right)^{-1}\sum_{n=0}^{N}\min\left\{ \left|C_{bn}\right|^{2},\left|C_{an}\right|^{2}\right\} e^{2\bar{\lambda}_{n}\left(t_{f}-t\right)}\mathcal{N}_{n}.\label{eq:3.5}
\end{equation}
\end{thm}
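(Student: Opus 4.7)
The plan is to exploit the orthogonality of $\{\tilde\phi_n\}$ in the weighted inner product $\langle u,v\rangle_w := \tfrac{K_b}{\kappa_b}\langle u,v\rangle_{L^2(-b,0)} + \tfrac{K_a}{\kappa_a}\langle u,v\rangle_{L^2(0,a)}$ provided by Remark \ref{rem:orth}, together with an elementary comparison between this weighted norm and the usual $L^2(-b,a)$ norm.

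First I would pass from the standard $L^2(-b,a)$-norm to the weighted one. Starting from $\|T_N\|_w^2 = \tfrac{K_b}{\kappa_b}\|T_N\|_{L^2(-b,0)}^2 + \tfrac{K_a}{\kappa_a}\|T_N\|_{L^2(0,a)}^2$ and applying the elementary inequality $c_1 a_1 + c_2 a_2 \le \max\{c_1,c_2,1\}(a_1+a_2)$ (valid for non-negative $a_i$ and $c_i$), I obtain $\|T_N(\cdot,t)\|_{L^2(-b,a)}^2 \ge (\max\{K_a/\kappa_a,K_b/\kappa_b,1\})^{-1}\|T_N(\cdot,t)\|_w^2$. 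The extra $1$ inside the $\max$ is what covers the regime where both thermal ratios $K_\alpha/\kappa_\alpha$ lie below unity, which is precisely the $1$ appearing in the statement.

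Next, I would expand $\|T_N\|_w^2$ by substituting \eqref{eq:tempsol}. Since $T_N$ is effectively assembled piecewise from the (possibly distinct) coefficients $C_{bn}$ on $(-b,0)$ and $C_{an}$ on $(0,a)$ extracted independently via \eqref{mainsystem}, the diagonal part of the expansion reads $\sum_{n=0}^{N}e^{2\bar\lambda_n(t_f-t)}\bigl(\tfrac{K_b}{\kappa_b}|C_{bn}|^2\|\phi_{bn}\|_{L^2(-b,0)}^2 + \tfrac{K_a}{\kappa_a}|C_{an}|^2\|\phi_{an}\|_{L^2(0,a)}^2\bigr)$. Bounding each of $|C_{bn}|^2$ and $|C_{an}|^2$ below by $\min\{|C_{bn}|^2,|C_{an}|^2\}$ and collecting via the closed-form $\mathcal{N}_n$ of Remark \ref{rem:orth} yields exactly the sum on the right-hand side of \eqref{eq:3.5}; combining with the first step produces the stated bound.

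The main obstacle is the off-diagonal ($m\ne n$) contribution to $\|T_N\|_w^2$: the orthogonality \eqref{eq:2.13} annihilates such cross terms only when the left-slab and right-slab coefficients coincide for every index, which generically fails for independently extracted noisy coefficients, so the residual cross sum is not sign-definite a priori. I plan to dispose of it by introducing an auxiliary truncated function $T_N^\star$ assembled from a single, consistent coefficient sequence of magnitude $\min\{|C_{bn}|,|C_{an}|\}$ with signs chosen so that $T_N^\star\in\mathrm{span}\{\tilde\phi_n\}$, applying the weighted Parseval identity directly to $T_N^\star$ to recover the target sum verbatim, and then establishing the slab-wise domination $\|T_N\|_{L^2(-b,a)}\ge\|T_N^\star\|_{L^2(-b,a)}$ from the chosen signs. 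Once this domination is secured, combination with the norm comparison of the first step delivers \eqref{eq:3.5}, and the factor $e^{2\bar\lambda_n(t_f-t)}$ with $\bar\lambda_n\to\infty$ makes transparent the exponential ill-posedness that the theorem advertises.
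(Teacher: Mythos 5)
Your opening reduction $\left\Vert T_N\right\Vert _{L^{2}(-b,a)}^{2}\ge\left(\max\left\{ K_a/\kappa_a,K_b/\kappa_b,1\right\} \right)^{-1}\left\Vert T_N\right\Vert _{w}^{2}$ and the identification of the diagonal contribution $\sum_{n}e^{2\bar{\lambda}_{n}(t_f-t)}\min\left\{ |C_{bn}|^{2},|C_{an}|^{2}\right\} \mathcal{N}_{n}$ are both correct, and you are right that the weighted orthogonality \eqref{eq:2.13} annihilates cross terms only for a function carried by a single coefficient sequence. The gap is precisely in the step you defer to the end: the claimed domination $\left\Vert T_N\right\Vert _{L^{2}(-b,a)}\ge\left\Vert T_N^{\star}\right\Vert _{L^{2}(-b,a)}$ (or, what you would actually need to chain with the weighted Parseval identity, $\left\Vert T_N\right\Vert _{w}\ge\left\Vert T_N^{\star}\right\Vert _{w}$) does not follow from shrinking the coefficient magnitudes to $\min\left\{ |C_{bn}|,|C_{an}|\right\}$, no matter how the signs are chosen. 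On each slab separately the restricted family $\left\{ \phi_{bn}\right\}$ (resp.\ $\left\{ \phi_{an}\right\}$) is \emph{not} orthogonal in $L^{2}(-b,0)$ (resp.\ $L^{2}(0,a)$) --- only the weighted two-slab combination in \eqref{eq:2.13} vanishes --- and the $L^{2}$-norm of a linear combination is not monotone in the absolute values of its coefficients over a non-orthogonal family: for unit vectors $u,v$ with $\langle u,v\rangle=0.9$ one has $\Vert u-v\Vert^{2}=0.2<0.25=\Vert\tfrac{1}{2}u\Vert^{2}$, even though both coefficients shrank in magnitude. Comparing $\Vert T_N\Vert_{w}^{2}$ with $\Vert T_N^{\star}\Vert_{w}^{2}$ reintroduces exactly the sign-indefinite cross sum over $m\ne n$ that you set out to avoid, because the two slabs carry different coefficients $C_{bn}\ne C_{an}$ and the individual inner products $\langle\phi_{bn},\phi_{bm}\rangle_{L^{2}(-b,0)}$ do not vanish. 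So the auxiliary-function device, as described, does not close the argument.

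For comparison, the paper's own proof does not resolve this difficulty either: it applies a term-by-term bound of the form $C_{bn}C_{bm}A+C_{an}C_{am}B\ge\left(\max\{\cdot\}\right)^{-1}\min\left\{ C_{bn}C_{bm},C_{an}C_{am}\right\} \left(\tfrac{K_b}{\kappa_b}A+\tfrac{K_a}{\kappa_a}B\right)$ to every pair $(m,n)$ of the double sum and only afterwards invokes \eqref{eq:2.13}; that pointwise inequality is valid only when $A=\langle\phi_{bn},\phi_{bm}\rangle_{L^{2}(-b,0)}$ and $B=\langle\phi_{an},\phi_{am}\rangle_{L^{2}(0,a)}$ are non-negative, which fails off the diagonal. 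The honest version of your argument proves \eqref{eq:3.5} under the additional hypothesis $C_{bn}=C_{an}=C_{n}$ --- which is what the representation \eqref{eq:tempsol}, written with a single $C_{n}$, literally asserts --- since then the weighted Parseval identity applies directly to $T_N$ and your first reduction finishes the proof (with $\min\{|C_{bn}|^{2},|C_{an}|^{2}\}=|C_{n}|^{2}$). Either restrict to that case explicitly, or supply a genuine estimate showing the off-diagonal sum is non-negative for these particular eigenfunctions; as written, the final domination step is an unproved claim that is false for general non-orthogonal families.
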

\begin{proof}
Consider the $L^{2}$-norm of \eqref{eq:tempsol} over $\left(-b,a\right)$:
%and after plugging the representation into the resulting equation, we arrive at
\begin{align}
\left\Vert T_N \left(\cdot,t\right)\right\Vert _{L^{2}\left(-b,a\right)}^{2} & =\left\langle \sum_{n=0}^{N}C_{n}e^{\bar{\lambda}_{n}\left(t_{f}-t\right)}\tilde{\phi}_{n},\sum_{n=0}^{N}C_{n}e^{\bar{\lambda}_{n}\left(t_{f}-t\right)}\tilde{\phi}_{n}\right\rangle _{L^{2}\left(-b,a\right)}\nonumber \\
 & =\sum_{n=0}^{N}\sum_{m=0}^{N}C_{bn}C_{bm}e^{\bar{\lambda}_{n}\left(t_{f}-t\right)}e^{\bar{\lambda}_{m}\left(t_{f}-t\right)}\left\langle \phi_{bn},\phi_{bm}\right\rangle _{L^{2}\left(-b,0\right)}\nonumber \\
 & +\sum_{n=0}^{N}\sum_{m=0}^{N}C_{an}C_{am}e^{\bar{\lambda}_{n}\left(t_{f}-t\right)}e^{\bar{\lambda}_{m}\left(t_{f}-t\right)}\left\langle \phi_{an},\phi_{an}\right\rangle _{L^{2}\left(0,a\right)}\nonumber \\
 & \ge\left(\max\left\{ \dfrac{K_{a}}{\kappa_{a}},\dfrac{K_{b}}{\kappa_{b}},1\right\} \right)^{-1}\sum_{n=0}^{N}\sum_{m=0}^{N}e^{\bar{\lambda}_{n}\left(t_{f}-t\right)}e^{\bar{\lambda}_{m}\left(t_{f}-t\right)}\nonumber \\
 & \times\min\left\{ C_{bn}C_{bm},C_{an}C_{am}\right\} \left(\frac{K_{b}}{\kappa_{b}}\left\langle \phi_{bn},\phi_{bm}\right\rangle _{L^{2}\left(-b,0\right)}+\frac{K_{a}}{\kappa_{a}}\left\langle \phi_{an},\phi_{an}\right\rangle _{L^{2}\left(0,a\right)}\right).\label{3.4}
\end{align}

From the orthogonality relation \eqref{eq:2.13},
we continue to estimate \eqref{3.4} and obtain \eqref{eq:3.5}.
\end{proof}

\begin{rem}
	One needs $J_{\alpha} = N$ to ensure the unique solvability of the linear system \eqref{mainsystem}. However, the question concerning how big the constant $N$ should be taken severely hinders the choice of $J_\alpha$. Therefore, the cut-off projection that we consider in the next subsection is important not only in regularization, but also in providing a precise value of $J_\alpha$ needed at each noise level.
\end{rem}

\subsection{The cut\textendash off projection}

This part of work aims at applying the cut-off projection to deal
with the aforementioned instability as well as proposes an effective
technique to compute the temperature solution $T\left(x,t\right)$. The
idea is to construct a projection in which a suitable high frequency
$N$ is appropriately selected depending on the measurement error $\varepsilon\in\left(0,1\right)$.
In other words, for each $\varepsilon>0$ we now introduce the cut-off
mapping $\mathbb{P}_{\varepsilon}$ such that

\begin{equation}
\mathbb{P}_{\varepsilon}T(x,t)=\sum_{n\in\Theta(\varepsilon)}C_{n}e^{\bar{\lambda}_{n}\left(t_{f}-t\right)}\tilde{\phi}_{n}\left(x\right),
\label{approx}
\end{equation}
where $\Theta(\varepsilon):=\left\{ n\in\mathbb{N}:\bar{\lambda}_{n}\le N_{\varepsilon}\right\} $
is the set of admissible frequencies and $N_{\varepsilon}:=N\left(\varepsilon\right)>0$
satisfying ${\displaystyle \lim_{\varepsilon\to0}N_{\varepsilon}=\infty}$
plays the role of the regularization parameter. Henceforward, the algorithm to compute the Fourier  coefficients $C_{\alpha n}$ in Subsection \ref{subsec:3-1} is now active with $J_\alpha = N_{\varepsilon}$.

As the main objectives of this section, we prove the following
theorems to analyze the stability and the convergence of the (approximate)
regularized solution governed by the representation \eqref{approx}.
\begin{thm}
\label{thm:stable}For each $\varepsilon>0$, the regularized solution
depends continuously on the final temperature in the $L^{2}$-norm by
the following estimate:
\begin{align} \label{eq:3.8.1}
\left\Vert \mathbb{P}_{\varepsilon}T\left(\cdot,t\right)\right\Vert _{L^{2}\left(-b,a\right)}^{2}\le\min\left\{ \dfrac{K_{b}}{\kappa_{b}},\dfrac{K_{a}}{\kappa_{a}},1\right\} ^{-1}N_{\varepsilon}e^{2N_{\varepsilon}\left(t_{f}-t\right)}\left(\frac{K_{b}}{\kappa_{b}}\left\Vert T_{b}\left(\cdot,t_{f}\right)\right\Vert _{L^{2}\left(-b,0\right)}^{2} \right. \\ \nonumber
\left. +\frac{K_{a}}{\kappa_{a}}\left\Vert T_{a}\left(\cdot,t_{f}\right)\right\Vert _{L^{2}\left(0,a\right)}^{2}\right).
\end{align}
\end{thm}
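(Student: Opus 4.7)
The plan is to reduce \eqref{eq:3.8.1} to per‑slab estimates through the following chain of steps. First, I would split the spatial integration,
\[
\|\mathbb{P}_{\varepsilon}T(\cdot,t)\|_{L^{2}(-b,a)}^{2} = \|\mathbb{P}_{\varepsilon}T(\cdot,t)\|_{L^{2}(-b,0)}^{2} + \|\mathbb{P}_{\varepsilon}T(\cdot,t)\|_{L^{2}(0,a)}^{2},
\]
and then introduce the material weights $K_{\alpha}/\kappa_{\alpha}$ via the elementary bound
\[
f + g \;\le\; \min\!\left\{\tfrac{K_{b}}{\kappa_{b}},\,\tfrac{K_{a}}{\kappa_{a}},\,1\right\}^{-1}\!\left(\tfrac{K_{b}}{\kappa_{b}}f + \tfrac{K_{a}}{\kappa_{a}}g\right),\qquad f,g\ge 0,
\]
which accounts precisely for the outer constant appearing in \eqref{eq:3.8.1}.

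On each slab, I would next apply the Cauchy--Schwarz inequality pointwise in $x$ to the finite truncation $\mathbb{P}_{\varepsilon}T(x,t)=\sum_{n\in\Theta(\varepsilon)}e^{\bar{\lambda}_{n}(t_{f}-t)}C_{n}\tilde{\phi}_{n}(x)$, paying a factor equal to the cardinality of $\Theta(\varepsilon)$ (which, since the $\bar{\lambda}_{n}$ grow unboundedly, is controlled by $N_{\varepsilon}$), and use the defining inequality $\bar{\lambda}_{n}\le N_{\varepsilon}$ on $\Theta(\varepsilon)$ to pull $e^{2N_{\varepsilon}(t_{f}-t)}$ outside the sum. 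Integration over the left slab then yields
\[
\|\mathbb{P}_{\varepsilon}T(\cdot,t)\|_{L^{2}(-b,0)}^{2} \;\le\; N_{\varepsilon}\,e^{2N_{\varepsilon}(t_{f}-t)}\sum_{n\in\Theta(\varepsilon)}|C_{bn}|^{2}\,\|\phi_{bn}\|_{L^{2}(-b,0)}^{2},
\]
and analogously on $(0,a)$. To bound the surviving sum, I would invoke \eqref{eq:C+C} specialized to $(\vartheta_{b},\vartheta_{a})=(1,0)$, which gives $C_{bn}=\mathcal{N}_{n}^{-1}(K_{b}/\kappa_{b})\,\langle T_{b}(\cdot,t_{f}),\phi_{bn}\rangle_{L^{2}(-b,0)}$, and combine it with the immediate upper bound $(K_{b}/\kappa_{b})\,\|\phi_{bn}\|_{L^{2}(-b,0)}^{2}\le\mathcal{N}_{n}$ read off from the definition of $\mathcal{N}_{n}$ in Remark~\ref{rem:orth}. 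A single Cauchy--Schwarz on the inner product then produces the term‑by‑term bound $|C_{bn}|^{2}\|\phi_{bn}\|_{L^{2}(-b,0)}^{2}\le\|T_{b}(\cdot,t_{f})\|_{L^{2}(-b,0)}^{2}$; the same maneuver with $(\vartheta_{b},\vartheta_{a})=(0,1)$ handles the right slab, and reassembling the two contributions gives \eqref{eq:3.8.1}.

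The main obstacle I expect is the non‑standard orthogonality of Remark~\ref{rem:orth}: the family $\{\phi_{bn}\}$ is \emph{not} orthogonal in $L^{2}(-b,0)$ on its own, because the orthogonality inherently couples both slabs through the weights $K_{\alpha}/\kappa_{\alpha}$. This rules out a plain Bessel identity $\sum |C_{bn}|^{2}\|\phi_{bn}\|^{2}\le\|T_{b}\|^{2}$ and forces a term‑by‑term estimate through \eqref{eq:C+C} rather than a single global Parseval argument. That same structural defect is what makes the finite‑sum Cauchy--Schwarz in the second step unavoidable and, consequently, what produces the $N_{\varepsilon}$ prefactor in the statement; in a genuinely orthogonal setting the prefactor would disappear.
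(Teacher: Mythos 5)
Your overall strategy (expand the truncated series, bound the coefficients through \eqref{eq:C+C} and Cauchy--Schwarz, and use $\bar{\lambda}_{n}\le N_{\varepsilon}$ on $\Theta(\varepsilon)$) is the same as the paper's, but the way you decouple the modes loses a factor of $N_{\varepsilon}$, so the argument as written does not prove \eqref{eq:3.8.1}. Concretely, you pay the cardinality of $\Theta(\varepsilon)$ twice: once in the pointwise Cauchy--Schwarz on the finite sum, which gives $\|\mathbb{P}_{\varepsilon}T(\cdot,t)\|_{L^{2}(-b,0)}^{2}\le |\Theta(\varepsilon)|\,e^{2N_{\varepsilon}(t_{f}-t)}\sum_{n\in\Theta(\varepsilon)}|C_{bn}|^{2}\|\phi_{bn}\|_{L^{2}(-b,0)}^{2}$, and a second time when you bound \emph{each} summand by the full data norm $\|T_{b}(\cdot,t_{f})\|_{L^{2}(-b,0)}^{2}$ and then sum $|\Theta(\varepsilon)|$ such terms. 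The net result is $|\Theta(\varepsilon)|^{2}e^{2N_{\varepsilon}(t_{f}-t)}(\cdots)$, i.e.\ an $N_{\varepsilon}^{2}$ prefactor rather than the $N_{\varepsilon}$ in the statement. The paper avoids the first payment precisely by \emph{not} discarding the coupled orthogonality \eqref{eq:2.13}: the double sum collapses to the diagonal sum in \eqref{3.7} at the cost of only the $\min\{K_{b}/\kappa_{b},K_{a}/\kappa_{a},1\}^{-1}$ constant, and the single cardinality payment occurs at the last step. Your closing remark that the finite-sum Cauchy--Schwarz is ``what produces the $N_{\varepsilon}$ prefactor'' is therefore backwards: it produces a \emph{superfluous} one.

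A secondary concern: your specialization of \eqref{eq:C+C} to $(\vartheta_{b},\vartheta_{a})=(1,0)$ yields the per-slab identity $C_{bn}=\mathcal{N}_{n}^{-1}(K_{b}/\kappa_{b})\langle T_{b}(\cdot,t_{f}),\phi_{bn}\rangle_{L^{2}(-b,0)}$, which is exactly the kind of single-slab Fourier formula that the non-orthogonality of $\{\phi_{bn}\}$ in $L^{2}(-b,0)$ forbids --- the derivation sketched before \eqref{eq:C+C} only cancels the cross terms after the two weighted inner products are \emph{added}, so only the combined ($\vartheta_{b}=\vartheta_{a}$) form is actually supported. You correctly diagnose this obstruction in your final paragraph, but your step 3 then relies on the very identity it rules out. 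To repair the proof, keep the two slabs coupled throughout: pass to the diagonal via \eqref{eq:2.13}, use the elementary inequality $a_{1}a_{3}^{2}+a_{2}a_{4}^{2}\le(a_{1}+a_{2})(a_{3}^{2}+a_{4}^{2})$ together with the combined form of \eqref{eq:C+C} to bound $\frac{K_{b}}{\kappa_{b}}\bar{C}_{bn}^{2}\|\phi_{bn}\|^{2}+\frac{K_{a}}{\kappa_{a}}\bar{C}_{an}^{2}\|\phi_{an}\|^{2}$ by $\mathcal{N}_{n}^{-1}e^{2\bar{\lambda}_{n}(t_{f}-t)}$ times the squared weighted inner products, and only then apply Cauchy--Schwarz and sum over $\Theta(\varepsilon)$.
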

\begin{proof}
Once again, we use the definition of the norm induced by inner product
in $L^{2}\left(-b,a\right)$ and then the proof is straightforward
with the non-trivial orthogonality \eqref{eq:2.13}. Indeed,
expressing the projection onto the solution gives us

\begin{align*}
& \left\Vert \mathbb{P}_{\varepsilon}T\left(\cdot,t\right)\right\Vert _{L^{2}\left(-b,a\right)}^{2} :=\left\langle \mathbb{P}_{\varepsilon}T\left(\cdot,t\right),\mathbb{P}_{\varepsilon}T\left(\cdot,t\right)\right\rangle _{L^{2}\left(-b,a\right)}\\
& =\sum_{n\in\Theta\left(\varepsilon\right)}\sum_{m\in\Theta\left(\varepsilon\right)}\bar{C}_{bn}\left(t\right)\bar{C}_{bm}\left(t\right)\left\langle \phi_{bn},\phi_{bm}\right\rangle _{L^{2}\left(-b,0\right)}
 +\sum_{n\in\Theta\left(\varepsilon\right)}\sum_{m\in\Theta\left(\varepsilon\right)}\bar{C}_{an}\left(t\right)\bar{C}_{am}\left(t\right)\left\langle \phi_{an},\phi_{am}\right\rangle _{L^{2}\left(0,a\right)},
\end{align*}
where we have denoted by $\bar{C}_{\alpha n}\left(t\right)=C_{\alpha n}e^{\bar{\lambda}_{n}\left(t_{f}-t\right)}$
for $\alpha\in\left\{ a,b\right\} $. This leads to

\begin{align} \label{3.7}
\left\Vert \mathbb{P}_{\varepsilon}T\left(\cdot,t\right)\right\Vert _{L^{2}\left(-b,a\right)}^{2}\le\min\left\{ \dfrac{K_{b}}{\kappa_{b}},\dfrac{K_{a}}{\kappa_{a}},1\right\} ^{-1}\sum_{n\in\Theta\left(\varepsilon\right)}\left(\dfrac{K_{b}\bar{C}_{bn}^{2}\left(t\right)}{\kappa_{b}}\left\Vert \phi_{bn}\right\Vert _{L^{2}\left(-b,0\right)}^{2} \right. \\
\left. +\dfrac{K_{a}\bar{C}_{an}^{2}\left(t\right)}{\kappa_{a}}\left\Vert \phi_{an}\right\Vert _{L^{2}\left(0,a\right)}^{2}\right). \nonumber
\end{align}

We then combine the elementary inequality $a_{1}a_{3}^{2}+a_{2}a_{4}^{2}\le\left(a_{1}+a_{2}\right)\left(a_{3}^2+a_{4}^2\right)$
for $a_{1},a_{2}\ge0$ and $a_{3},a_{4}\in\mathbb{R}$ with the properties
in Remark \ref{rem:1} and \eqref{eq:C+C}, to yield from \eqref{3.7}
that
\begin{align*}
\left\Vert \mathbb{P}_{\varepsilon}T\left(\cdot,t\right)\right\Vert _{L^{2}\left(-b,a\right)}^{2} & \le\min\left\{ \dfrac{K_{b}}{\kappa_{b}},\dfrac{K_{a}}{\kappa_{a}},1\right\} ^{-1}\sum_{n\in\Theta\left(\varepsilon\right)}\mathcal{N}_{n}^{-1}e^{2\bar{\lambda}_{n}\left(t_{f}-t\right)}\\
 & 
 \times \left[\left(\frac{K_{b}}{\kappa_{b}}\left|\left\langle T_{b}\left(\cdot,t_{f}\right),\phi_{bn}\right\rangle _{L^{2}\left(-b,0\right)}\right|\right)^2
+\left(\frac{K_{a}}{\kappa_{a}}\left|\left\langle T_{a}\left(\cdot,t_{f}\right),\phi_{an}\right\rangle _{L^{2}\left(0,a\right)}\right|\right)^{2}\right].
\end{align*}

Using the Cauchy-Schwarz inequality and the basic inequality $a_1^2 a_3^2+a_2^2 a_4^2\le\left(a_1^{2}+a_2^{2}\right)\left(a_3^{2}+a_4^{2}\right)$ for all $a_j\in\mathbb{R},j\in \left\{1,...,4\right\}$
we get
\begin{align*}
\left(\frac{K_{b}}{\kappa_{b}}\left|\left\langle T_{b}\left(\cdot,t_{f}\right),\phi_{bn}\right\rangle _{L^{2}\left(-b,0\right)}\right|\right)^2
+\left(\frac{K_{a}}{\kappa_{a}}\left|\left\langle T_{a}\left(\cdot,t_{f}\right),\phi_{an}\right\rangle _{L^{2}\left(0,a\right)}\right|\right)^{2} & \le\mathcal{N}_{n}\left(\frac{K_{b}}{\kappa_{b}}\left\Vert T_{b}\left(\cdot,t_{f}\right)\right\Vert _{L^{2}\left(-b,0\right)}^{2}\right.\\
 & \left.+\frac{K_{a}}{\kappa_{a}}\left\Vert T_{a}\left(\cdot,t_{f}\right)\right\Vert _{L^{2}\left(0,a\right)}^{2}\right),
\end{align*}
and by definition of the set $\Theta\left(\varepsilon\right)$, we
arrive at \eqref{eq:3.8.1}
\end{proof}
Theorem \ref{thm:stable} shows us that the cut-off projection solution
is bounded at each noise level, and such an interesting estimate proves
that the regularized solution is stable even when the final temperature
is corrupted by noise. Furthermore, its appearance enables
us to derive the $L^{2}$-type estimate between the solutions
$\mathbb{P}_{\varepsilon}T\left(x,t\right)$ and $\mathbb{P}_{\varepsilon}T^{\varepsilon}\left(x,t\right)$ (i.e. we apply the projection method to design the solution associated with the terminal measured data),
which we refer to the following lemma.
\begin{lem}
Let $\mathbb{P}_{\varepsilon}T^{\varepsilon}\left(x,t\right)$
be the regularized solution associated with the square\textendash integrable
measured data $T_{\alpha}^{\varepsilon}\left(\cdot,t_{f}\right)$
satisfying \eqref{terminalcondition}. Then the following estimate
holds:
\begin{equation}
\left\Vert \mathbb{P}_{\varepsilon}T\left(\cdot,t\right)-\mathbb{P}_{\varepsilon}T^{\varepsilon}\left(\cdot,t\right)\right\Vert _{L^{2}\left(-b,a\right)}^{2}\le\min\left\{ \dfrac{K_{b}}{\kappa_{b}},\dfrac{K_{a}}{\kappa_{a}},1\right\} ^{-1}N_{\varepsilon}e^{2N_{\varepsilon}\left(t_{f}-t\right)}\left(\frac{K_{b}}{\kappa_{b}}+\frac{K_{a}}{\kappa_{a}}\right)\varepsilon^{2}.\label{3.9}
\end{equation}
\end{lem}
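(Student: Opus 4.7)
The plan is to piggyback on Theorem \ref{thm:stable} by exploiting the linearity of the cut-off projection $\mathbb{P}_{\varepsilon}$. First I would observe that, since the regularized solution $\mathbb{P}_{\varepsilon}T^{\varepsilon}$ is constructed from exactly the same Fourier expansion \eqref{approx} in which each coefficient $C_{\alpha n}$ depends linearly on the terminal datum (via the system \eqref{mainsystem} or equivalently via \eqref{eq:C+C}), the difference $\mathbb{P}_{\varepsilon}T(\cdot,t)-\mathbb{P}_{\varepsilon}T^{\varepsilon}(\cdot,t)$ is itself the cut-off projection of a function whose terminal data are $T_{\alpha}(\cdot,t_f)-T_{\alpha}^{\varepsilon}(\cdot,t_f)$ for $\alpha\in\{a,b\}$.

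Next I would invoke Theorem \ref{thm:stable} directly on this difference. Replacing $T_{b}(\cdot,t_{f})$ by $T_{b}(\cdot,t_{f})-T_{b}^{\varepsilon}(\cdot,t_{f})$ and $T_{a}(\cdot,t_{f})$ by $T_{a}(\cdot,t_{f})-T_{a}^{\varepsilon}(\cdot,t_{f})$ in the estimate \eqref{eq:3.8.1} yields
\[
\left\Vert \mathbb{P}_{\varepsilon}T(\cdot,t)-\mathbb{P}_{\varepsilon}T^{\varepsilon}(\cdot,t)\right\Vert_{L^{2}(-b,a)}^{2}\le \min\left\{\tfrac{K_{b}}{\kappa_{b}},\tfrac{K_{a}}{\kappa_{a}},1\right\}^{-1}N_{\varepsilon}e^{2N_{\varepsilon}(t_{f}-t)}\left(\tfrac{K_{b}}{\kappa_{b}}\left\Vert T_{b}-T_{b}^{\varepsilon}\right\Vert_{L^{2}(-b,0)}^{2}+\tfrac{K_{a}}{\kappa_{a}}\left\Vert T_{a}-T_{a}^{\varepsilon}\right\Vert_{L^{2}(0,a)}^{2}\right).
\]

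Finally I would convert the sum-of-norms bound in the terminal condition \eqref{terminalcondition} into a sum-of-squares bound. Since both $\Vert T_{a}-T_{a}^{\varepsilon}\Vert_{L^{2}(0,a)}$ and $\Vert T_{b}-T_{b}^{\varepsilon}\Vert_{L^{2}(-b,0)}$ are nonnegative and their sum is at most $\varepsilon$, each of them is individually at most $\varepsilon$, so their squares are each at most $\varepsilon^{2}$. Substituting these two bounds into the weighted sum and factoring out $\varepsilon^{2}$ gives the desired inequality \eqref{3.9}.

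There is essentially no genuine obstacle here: the lemma is a corollary of Theorem \ref{thm:stable} via linearity, and the only mildly delicate point is the step from $\Vert\cdot\Vert+\Vert\cdot\Vert\le\varepsilon$ to individual $L^{2}$-squared bounds by $\varepsilon^{2}$, which I would state explicitly so that the chain of inequalities is transparent.
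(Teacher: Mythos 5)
Your proposal is correct and follows essentially the same route as the paper: the authors likewise rerun the stability estimate of Theorem \ref{thm:stable} on the difference (which, by linearity of the coefficients in the terminal data, is the projection of the data discrepancy) and then invoke \eqref{terminalcondition}. Your explicit remark that the sum-of-norms bound implies each squared norm is at most $\varepsilon^{2}$ is the same (tacit) final step the paper uses, just spelled out more carefully.
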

\begin{proof}
Similar to the proof of Theorem \ref{thm:stable}, we arrive at
\begin{align*}
\left\Vert \mathbb{P}_{\varepsilon}T\left(\cdot,t\right) - \mathbb{P}_{\varepsilon}T^{\varepsilon} \left(\cdot,t\right) \right\Vert _{L^{2}\left(-b,a\right)}^{2} & \le\min\left\{ \dfrac{K_{b}}{\kappa_{b}},\dfrac{K_{a}}{\kappa_{a}},1\right\} ^{-1}N_{\varepsilon}e^{2N_{\varepsilon}\left(t_{f}-t\right)} \\
& \left(\frac{K_{b}}{\kappa_{b}}\left\Vert T_{b}\left(\cdot,t_{f}\right) - T^\epsilon_{b}\left(\cdot,t_{f}\right)\right\Vert _{L^{2}\left(-b,0\right)}^{2}+\frac{K_{a}}{\kappa_{a}}\left\Vert T_{a}\left(\cdot,t_{f}\right) - T^\epsilon_{a}\left(\cdot,t_{f}\right)\right\Vert _{L^{2}\left(0,a\right)}^{2}\right).
\end{align*}

The proof of the lemma is completed by applying the terminal condition \eqref{terminalcondition} to the above estimate.

\end{proof}

At this moment, it is possible for us to prove the convergence result. From the triangle inequality, it follows
\[
\left\Vert T\left(\cdot,t\right)-\mathbb{P}_{\varepsilon}T^{\varepsilon} \left(\cdot,t\right)\right\Vert _{L^{2}\left(-b,a\right)}\leq\left\Vert \mathbb{P}_{\varepsilon}T\left(\cdot,t\right)-\mathbb{P}_{\varepsilon}T^{\varepsilon} \left(\cdot,t\right)\right\Vert _{L^{2}\left(-b,a\right)}+\left\Vert T\left(\cdot,t\right)-\mathbb{P}_{\varepsilon}T\left(\cdot,t\right)\right\Vert _{L^{2}\left(-b,a\right)},
\]
which then questions us how to estimate the norm $\left\Vert T\left(\cdot,t\right)-\mathbb{P}_{\varepsilon}T\left(\cdot,t\right)\right\Vert _{L^{2}\left(-b,a\right)}$.

We remind that in the Fourier-based sense, the quantity $N$ in the
representation \eqref{eq:tempsol} tends to infinity and that, consequently,
requires the regularization parameter to satisfy ${\displaystyle \lim_{\varepsilon\to0}N_{\varepsilon}=\infty}$.
To compare the difference between $T\left(x,t\right)$ and $\mathbb{P}_{\varepsilon}T\left(x,t\right)$,
we suppose that \eqref{eq:tempsol} reaches the most ideal case, i.e.
it can be represented by
\begin{equation}
T\left(x,t\right)=\sum_{n=0}^{\infty}C_{n} e^{\bar{\lambda}_{n}\left(t_{f}-t\right)}\tilde{\phi}_{n}\left(x\right).\label{eq:newtempsol}
\end{equation}

Subtract \eqref{eq:newtempsol} and \eqref{approx}, we thus see that
\[
T\left(x,t\right)-\mathbb{P}_{\varepsilon}T\left(x,t\right)=\sum_{n\notin\Theta\left(\varepsilon\right)} C_{n}e^{\bar{\lambda}_{n}\left(t_{f}-t\right)}\tilde{\phi}_{n}\left(x\right).
\]

Therefore, very much in line with the computations in the proof of
Theorem \ref{thm:stable}, the $L^{2}$-norm of the above difference
can be upper-bounded by
\begin{align}\label{3.10}
\left\Vert T\left(\cdot,t\right)-\mathbb{P}_{\varepsilon}T\left(\cdot,t\right)\right\Vert _{L^{2}\left(-b,a\right)}^{2}\le\min\left\{ \dfrac{K_{b}}{\kappa_{b}},\dfrac{K_{a}}{\kappa_{a}},1\right\} ^{-1}\sum_{n\notin\Theta\left(\varepsilon\right)}\left(\dfrac{K_{b}\bar{C}_{bn}^{2}\left(t\right)}{\kappa_{b}}\left\Vert \phi_{bn}\right\Vert _{L^{2}\left(-b,0\right)}^{2} \right. \\
\left.+\dfrac{K_{a}\bar{C}_{an}^{2}\left(t\right)}{\kappa_{a}}\left\Vert \phi_{an}\right\Vert _{L^{2}\left(0,a\right)}^{2}\right). \nonumber
\end{align}

On the other side, we compute that
\[
\partial_x T\left(x,t\right)=\sum_{n=0}^{\infty} C_{n}e^{\bar{\lambda}_{n}\left(t_{f}-t\right)}\tilde{\phi} ^{'}\left(x\right),
\]
which gives
\begin{align*}
\left\Vert \partial_x T\left(\cdot,t\right)\right\Vert _{L^{2}\left(-b,a\right)}^{2} & =\sum_{n=0}^{\infty}\sum_{m=0}^{\infty}\bar{C}_{bn}\left(t\right)\bar{C}_{bm}\left(t\right)\left\langle \phi_{bn}^{'},\phi_{bm}^{'}\right\rangle _{L^{2}\left(-b,0\right)} +\sum_{n=0}^{\infty}\sum_{m=0}^{\infty}\bar{C}_{an}\left(t\right)\bar{C}_{am}\left(t\right)\left\langle \phi_{an}^{'},\phi_{am}^{'}\right\rangle _{L^{2}\left(0,a\right)},
\end{align*}
and using \eqref{eq:2.13} we obtain
\begin{equation}
\left\Vert \partial_x T\left(\cdot,t\right)\right\Vert _{L^{2}\left(-b,a\right)}^{2}\ge\left(\max\left\{ K_{a},K_{b},1\right\} \right)^{-1}\sum_{n=0}^{\infty}\left(K_{b}\bar{C}_{bn}^{2}\left(t\right)\left\Vert \phi_{bn}^{'}\right\Vert _{L^{2}\left(-b,0\right)}^{2}+K_{a}\bar{C}_{an}^{2}\left(t\right)\left\Vert \phi_{an}^{'}\right\Vert _{L^{2}\left(0,a\right)}^{2}\right).\label{eq:3.10}
\end{equation}

Furthermore, it reveals from \eqref{3.10} that
\begin{eqnarray}
&&\left\Vert T\left(\cdot,t\right)-\mathbb{P}_{\varepsilon}T\left(\cdot,t\right)\right\Vert _{L^{2}\left(-b,a\right)}^{2} \nonumber\\
&\le& \min\left\{ \dfrac{K_{b}}{\kappa_{b}},\dfrac{K_{a}}{\kappa_{a}},1\right\} ^{-1}\sum_{n\notin\Theta\left(\varepsilon\right)}\bar{\lambda}_{n}^{-1}\left(K_{b}\bar{C}_{bn}^{2}\left(t\right)\left\Vert \phi_{bn}^{'}\right\Vert _{L^{2}\left(-b,0\right)}^{2}+K_{a}\bar{C}_{an}^{2}\left(t\right)\left\Vert \phi_{an}^{'}\right\Vert _{L^{2}\left(0,a\right)}^{2}\right)\nonumber \\
 & \le & \min\left\{ \dfrac{K_{b}}{\kappa_{b}},\dfrac{K_{a}}{\kappa_{a}},1\right\} ^{-1}N_{\varepsilon}^{-1}\sum_{n\notin\Theta\left(\varepsilon\right)}\left(K_{b}\bar{C}_{bn}^{2}\left(t\right)\left\Vert \phi_{bn}^{'}\right\Vert _{L^{2}\left(-b,0\right)}^{2}+K_{a}\bar{C}_{an}^{2}\left(t\right)\left\Vert \phi_{an}^{'}\right\Vert _{L^{2}\left(0,a\right)}^{2}\right).\label{3.12}
\end{eqnarray}

Combining \eqref{eq:3.10} and \eqref{3.12} we observe that
\begin{equation}
\left\Vert T\left(\cdot,t\right)-\mathbb{P}_{\varepsilon}T\left(\cdot,t\right)\right\Vert _{L^{2}\left(-b,a\right)}^{2}\le\min\left\{ \dfrac{K_{b}}{\kappa_{b}},\dfrac{K_{a}}{\kappa_{a}},1\right\} ^{-1}\max\left\{ K_{a},K_{b},1\right\} N_{\varepsilon}^{-1}\left\Vert \partial_x T\left(\cdot,t\right)\right\Vert _{L^{2}\left(-b,a\right)}^{2}.\label{3.14}
\end{equation}

At this stage, we conclude that the error $\left\Vert T\left(\cdot,t\right)-\mathbb{P}_{\varepsilon}T\left(\cdot,t\right)\right\Vert _{L^{2}\left(-b,a\right)}$
will tend to zero as $\varepsilon\to0$ if the gradient of the temperature
solution is bounded over $L^{2}\left(-b,a\right)$. By using the triangle
inequality and the errors in \eqref{3.9} and \eqref{3.14}, we easily
obtain the estimate $\left\Vert T\left(\cdot,t\right)-T^{\varepsilon}\left(\cdot,t\right)\right\Vert _{L^{2}\left(-b,a\right)}$.
We thus have the following theorem.
\begin{thm}
Assume that $T\left(\cdot,t\right)\in H^{1}\left(-b,a\right)$ for all $t\in J$ and
let $\mathbb{P}_{\varepsilon}T^{\varepsilon}\left(x,t\right)$
be the regularized solution associated with the square\textendash integrable
measured data $T_{\alpha}^{\varepsilon}\left(\cdot,t_{f}\right)$.
Then the following estimate holds:
\begin{align}
\left\Vert T\left(\cdot,t\right)-\mathbb{P}_{\varepsilon}T^{\varepsilon}\left(\cdot,t\right)\right\Vert _{L^{2}\left(-b,a\right)} & \leq\min\left\{ \dfrac{K_{b}}{\kappa_{b}},\dfrac{K_{a}}{\kappa_{a}},1\right\} ^{-\frac{1}{2}}\left(\sqrt{\dfrac{K_{b}}{\kappa_{b}}}+\sqrt{\dfrac{K_{a}}{\kappa_{a}}}\right)N_{\varepsilon}^{\frac{1}{2}}e^{N_{\varepsilon}\left(t_{f}-t\right)}\varepsilon\nonumber \\
 & +\min\left\{ \dfrac{K_{b}}{\kappa_{b}},\dfrac{K_{a}}{\kappa_{a}},1\right\} ^{-\frac{1}{2}}\max\left\{ K_{a},K_{b},1\right\} ^{\frac{1}{2}}N_{\varepsilon}^{-\frac{1}{2}}\left\Vert \partial_x T\left(\cdot,t\right)\right\Vert _{L^{2}\left(-b,a\right)}.\label{3.15}
\end{align}
\end{thm}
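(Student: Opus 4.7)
The plan is to establish \eqref{3.15} via the triangle-inequality decomposition
\[
\left\Vert T(\cdot,t) - \mathbb{P}_{\varepsilon}T^{\varepsilon}(\cdot,t)\right\Vert_{L^{2}(-b,a)} \le \left\Vert T(\cdot,t) - \mathbb{P}_{\varepsilon}T(\cdot,t)\right\Vert_{L^{2}(-b,a)} + \left\Vert \mathbb{P}_{\varepsilon}T(\cdot,t) - \mathbb{P}_{\varepsilon}T^{\varepsilon}(\cdot,t)\right\Vert_{L^{2}(-b,a)},
\]
which is exactly the splitting flagged immediately before the theorem statement. The two summands on the right are then estimated separately by the bounds just proven: \eqref{3.14} handles the projection/truncation error, while \eqref{3.9} handles the propagation of measurement noise.

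First I would take the square root of \eqref{3.9} to bound the noise-propagation piece, obtaining
\[
\left\Vert \mathbb{P}_{\varepsilon}T(\cdot,t) - \mathbb{P}_{\varepsilon}T^{\varepsilon}(\cdot,t)\right\Vert_{L^{2}(-b,a)} \le \min\left\{\tfrac{K_{b}}{\kappa_{b}},\tfrac{K_{a}}{\kappa_{a}},1\right\}^{-1/2} N_{\varepsilon}^{1/2}\, e^{N_{\varepsilon}(t_{f}-t)}\, \sqrt{\tfrac{K_{b}}{\kappa_{b}}+\tfrac{K_{a}}{\kappa_{a}}}\,\varepsilon.
\]
To match the first line of \eqref{3.15} verbatim, I would then invoke the elementary concavity bound $\sqrt{x+y}\le\sqrt{x}+\sqrt{y}$ valid for $x,y\ge 0$, which splits $\sqrt{K_{b}/\kappa_{b}+K_{a}/\kappa_{a}}$ into $\sqrt{K_{b}/\kappa_{b}}+\sqrt{K_{a}/\kappa_{a}}$.

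Second I would take the square root of \eqref{3.14} to control the projection error:
\[
\left\Vert T(\cdot,t) - \mathbb{P}_{\varepsilon}T(\cdot,t)\right\Vert_{L^{2}(-b,a)} \le \min\left\{\tfrac{K_{b}}{\kappa_{b}},\tfrac{K_{a}}{\kappa_{a}},1\right\}^{-1/2} \max\{K_{a},K_{b},1\}^{1/2}\, N_{\varepsilon}^{-1/2}\, \|\partial_{x}T(\cdot,t)\|_{L^{2}(-b,a)},
\]
which is finite precisely thanks to the hypothesis $T(\cdot,t)\in H^{1}(-b,a)$. Adding this to the preceding bound delivers \eqref{3.15} line by line.

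There is essentially no obstacle here: the substantive analysis has already been carried out in deriving \eqref{3.9} and \eqref{3.14}, so the argument amounts to a mechanical assembly through the triangle inequality. The only small care needed is in the bookkeeping of constants, namely passing cleanly from the squared source estimates to the unsquared target inequality, performing the $\sqrt{x+y}\le\sqrt{x}+\sqrt{y}$ splitting to recover the precise algebraic form of the first summand, and observing that the two source bounds carry distinct combinations of the $\min$ and $\max$ constants in the thermal parameters, both of which propagate intact into the final estimate.
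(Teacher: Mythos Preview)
Your proposal is correct and follows exactly the paper's approach: the paper explicitly states the same triangle-inequality decomposition just before the theorem and then simply remarks that combining \eqref{3.9} and \eqref{3.14} yields the result, without writing out any further details. Your write-up merely fills in the routine bookkeeping (square roots and the $\sqrt{x+y}\le\sqrt{x}+\sqrt{y}$ step) that the paper leaves implicit.
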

\begin{rem}
\label{rem:8}
In general, the error estimate \eqref{3.15} can be rewritten as
\[
\left\Vert T\left(\cdot,t\right)-\mathbb{P}_{\varepsilon}T^{\varepsilon}\left(\cdot,t\right)\right\Vert _{L^{2}\left(-b,a\right)}\le C\left(N_{\varepsilon}^{\frac{1}{2}}e^{N_{\varepsilon}\left(t_{f}-t\right)}\varepsilon+N_{\varepsilon}^{-\frac{1}{2}}\right),
\]
in which $C$ is a generic positive constant. Then by choosing $N_{\varepsilon}=\beta\ln\left(\varepsilon^{-\gamma}\right)/t_{f}$
for $\beta\in\left(0,1\right)$ and $\gamma\in\left(0,1\right]$ we obtain that
\[
\left\Vert T\left(\cdot,t\right)-\mathbb{P}_{\varepsilon}T^{\varepsilon}\left(\cdot,t\right)\right\Vert _{L^{2}\left(-b,a\right)}\le C\left(\sqrt{\frac{\beta}{t_{f}}}\ln^{\frac{1}{2}}\left(\varepsilon^{-\gamma}\right)\varepsilon^{1-\gamma\left(1-\frac{t}{t_{f}}\right)\beta}+\sqrt{\frac{t_{f}}{\beta}}\ln^{-\frac{1}{2}}\left(\varepsilon^{-\gamma}\right)\right).
\]

Consequently, $\mathbb{P}_{\varepsilon}T^{\varepsilon}$ is a good approximation of $T$ as the small noise $\varepsilon$ tends to zero. On top of that, this rate of convergence is uniform in time.

It is worth pointing out that due to the structural property of the
eigen-elements, one may deduce that for $p\in\mathbb{N}$ and $m\ne n$
the functions $\tilde{\phi}_{m} ^{(2p)}$ and $\tilde{\phi}_{n} ^{(2p)}$
are orthogonal with weights $K_{b}/\kappa_{b}$ and $K_{a}/\kappa_{a}$
whilst the functions $\tilde{\phi}_{m} ^{(2p+1)}$ and $\tilde{\phi}_{n} ^{(2p+1)}$
are orthogonal with weights $K_{b}$ and $K_{a}$. On the other hand,
we see the following relations
\begin{align*}
\frac{K_{b}}{\kappa_{b}}\left\Vert \phi_{bn} ^{(2p)}\right\Vert _{L^{2}\left(-b,0\right)}^{2} =\bar{\lambda}_{n}^{-1}K_{b}\left\Vert \phi_{bn} ^{(2p+1)}\right\Vert _{L^{2}\left(-b,0\right)}^{2} &, \quad
\frac{K_{a}}{\kappa_{a}}\left\Vert \phi_{an} ^{(2p)}\right\Vert _{L^{2}\left(0,a\right)}^{2} =\bar{\lambda}_{n}^{-1}K_{a}\left\Vert \phi_{an} ^{(2p+1)}\right\Vert _{L^{2}\left(0,a\right)}^{2}.
\end{align*}

Combining these two relations with the fact that $\phi_{\alpha n} ^{(2p)}=\left(-1\right)^{p}\lambda_{\alpha n}^{2p}\phi_{\alpha n}$
leads us to the $L^{2}$-norm equivalence between $\phi_{\alpha n}$
and their high-order derivatives. Therefore, if we wish to obtain
the error estimate over the space $H^{p}\left(-b,a\right)$, the requirement
will be $T\left(\cdot,t\right)\in H^{p+1}\left(-b,a\right)$. Note that to tackle the error estimate in this context, only the difference between the high-order derivatives of the exact solution and the regularized is needed. Such
result can be found explicitly in \cite{NTT10}.
\end{rem}

\section{Numerical tests\label{sec:4}}

Following the example from Remark \ref{rem:1}, in this section we consider
three examples including the smooth, piecewise smooth and discontinuous cases
in a short-time observation $J=\left[0,0.1\right]$. Note that herein we
do not know the exact solution, but at some point we can generate artificial
data to see how the approximation works. All computations were
done using MATLAB, and only using 5 digit floating point arithmetic.
Recall that with $b=5\left(\text{cm}\right)$ and $a=3\left(\text{cm}\right)$,
the materials we illustrate herein are the copper versus the molybdeum
providing $\kappa_{\alpha}\in\left\{ 0.838,0.339\right\} $ and $K_{\alpha}\in\left\{ 3.42,1.05\right\} $,
respectively. We also select $\beta=0.05$, $\gamma=0.5$ and fix 20 discrete spatial points for each slab throughout this illustration. Accordingly, we are able to compute the regularization parameter $N_\varepsilon$ which has been chosen in Remark \ref{rem:8}.

\subsection{Example 1\label{subsec:Example-1}}
In this part, the stability of the approximate solution is investigated
to prove the approximation as $\varepsilon\to0$. Practically, we
merely have the measured final data given by
\begin{align*}
T_{b}^{\varepsilon}\left(x_{j},t_{f}\right) & =\frac{\cos\left(\lambda_{b1}\left(x_{j}+b\right)\right)}{\left(a+b\right)\cos\left(\lambda_{b1}b\right)}e^{-\frac{t_{f}}{2}}+\text{rand}\left(x_{j}\right)\varepsilon,\quad x_j \in (-b,0) ,\\
T_{a}^{\varepsilon}\left(x_{j},t_{f}\right) & =\frac{\cos\left(\lambda_{a1}\left(x_{j}-a\right)\right)}{\left(a+b\right)\cos\left(\lambda_{a1}a\right)}e^{-\frac{t_{f}}{2}}+\text{rand}\left(x_{j}\right)\varepsilon, \quad x_j \in (0,a),
\end{align*}
with $\left|\text{rand}\left(x_{j}\right)\right|\le\sqrt{2\max\left\{ a,b\right\} }$
being a random number generator.

\begin{figure}
\begin{centering}
\includegraphics[scale=0.8]{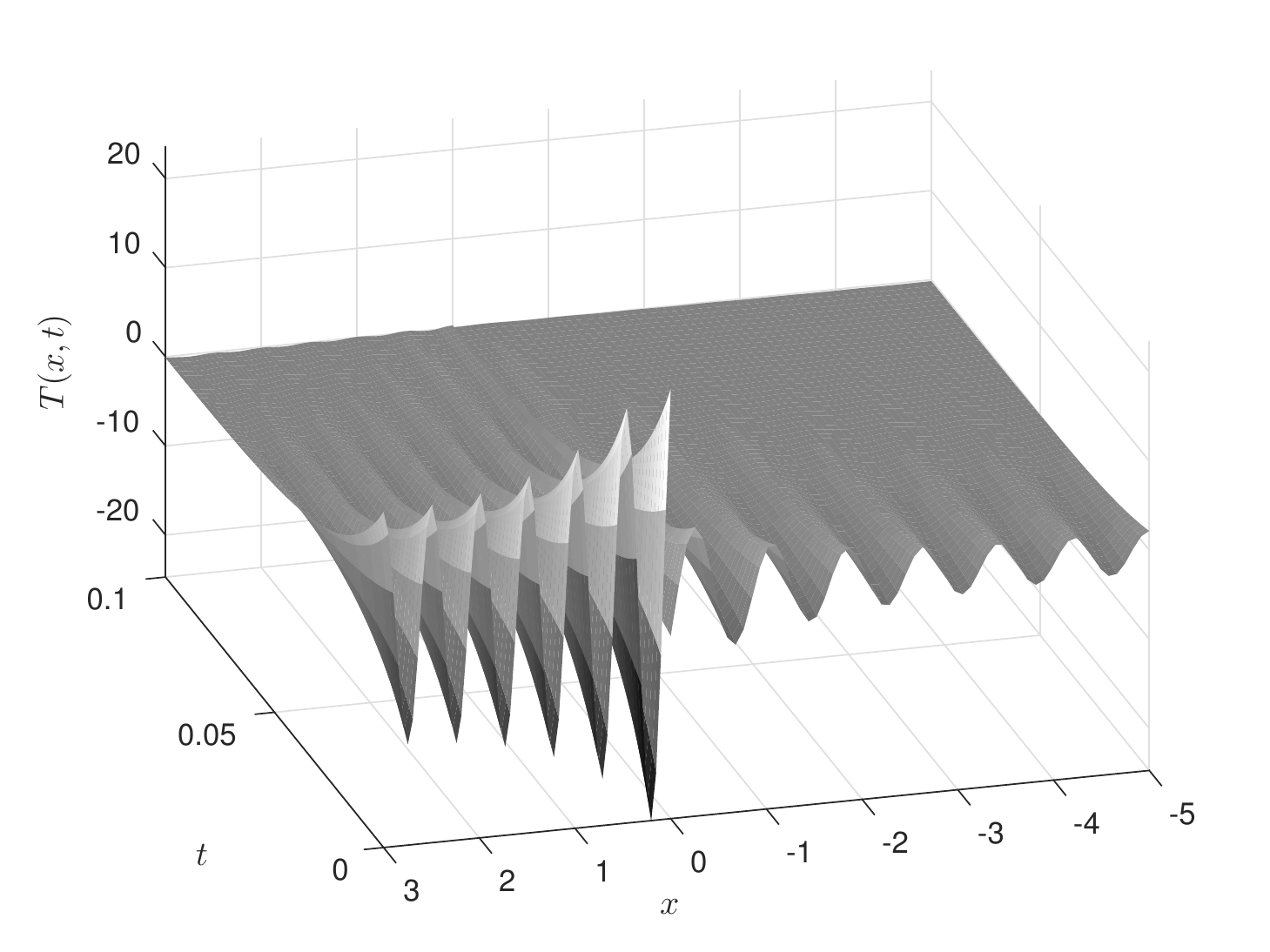}
\par\end{centering}
\caption{The instability illustration with $\varepsilon=10^{-2}$ at a cut-off
high-frequency (selected randomly) $N=50$. (Example \ref{subsec:Example-1})\label{fig:The-instability-illustration}}
\end{figure}

\begin{figure}
\begin{centering}
\includegraphics[scale=0.8]{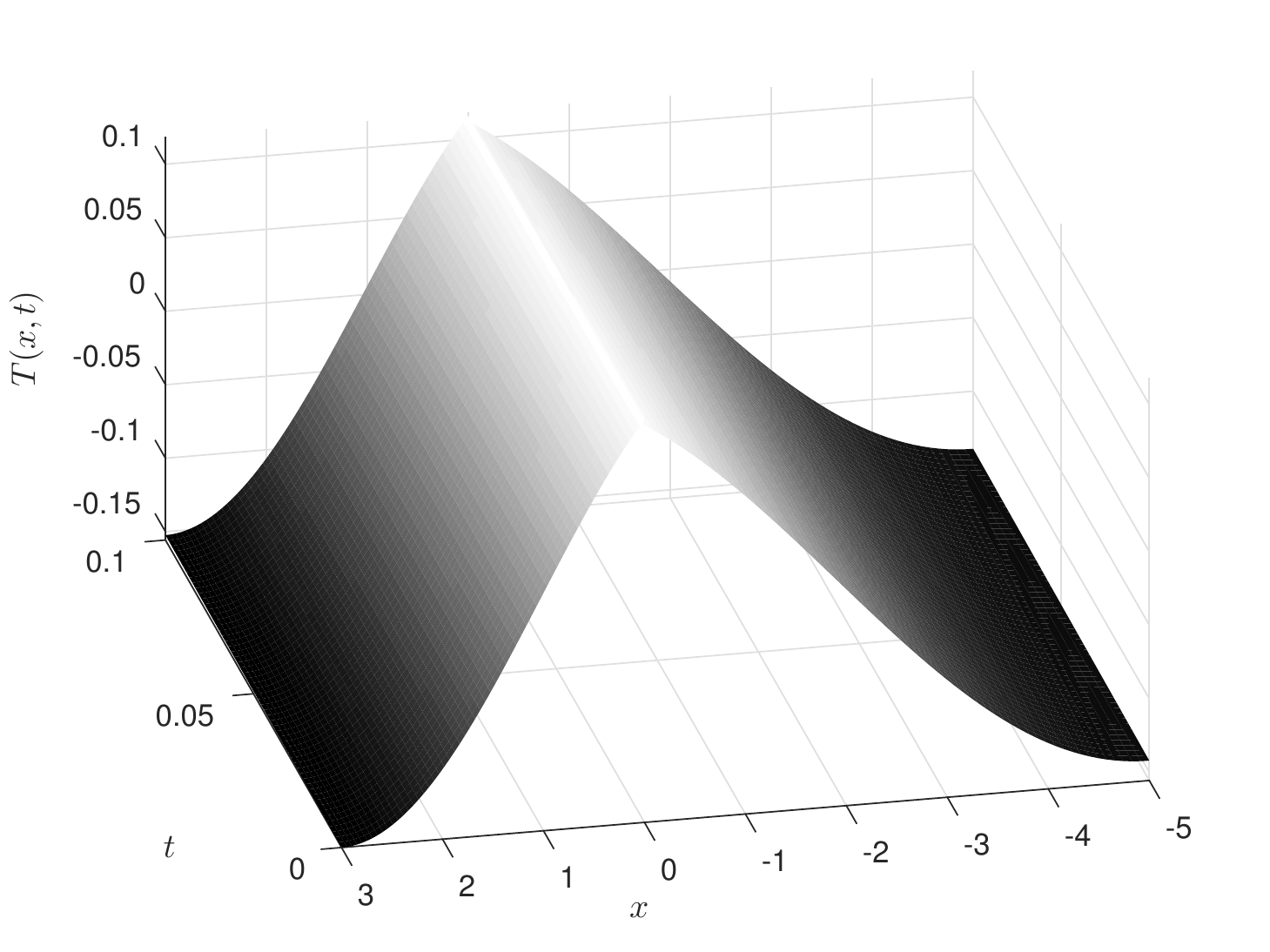}
\par\end{centering}
\caption{The approximate temperature throughout the two-slab system with $\varepsilon=10^{-2}$
in a short-time observation. (Example \ref{subsec:Example-1})\label{fig:Thestability}}
\end{figure}

\begin{figure}
\begin{centering}
\includegraphics[scale=0.8]{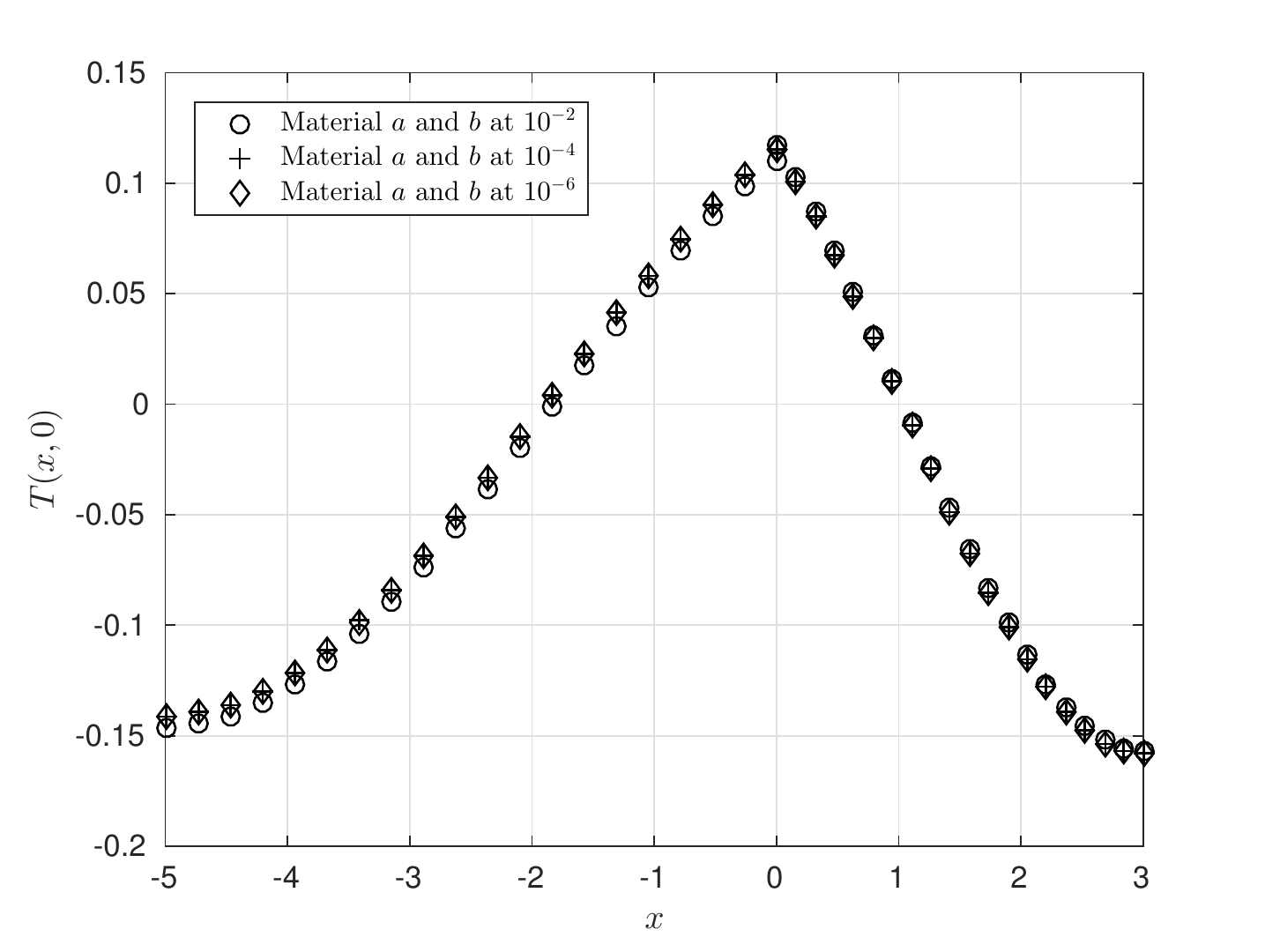}
\par\end{centering}
\caption{The stability illustration of the proposed cut-off projection,
showing the approximate temperature throughout the two-slab system
at $t_{0}=0$ with $\varepsilon\in\left\{ 10^{-2},10^{-4},10^{-6}\right\} $.
(Example \ref{subsec:Example-1})\label{fig:Thestability-1}}
\end{figure}

\subsection{Example 2}\label{subsec:Example-2}

In this example, we assume that the exact initial data are known, albeit of
the unknown exact temperature solution. They are given by piecewise smooth
functions, as follows:
\begin{align*}
T_{b}\left(x,0\right)=\begin{cases}
x, & x\in\left(-\dfrac{b}{2},0\right]\\
-\dfrac{b}{2}, & x\in\left[-b,\dfrac{b}{2}\right]
\end{cases}
&\quad \text{and} \;
T_{a}\left(x,0\right)=\begin{cases}
-\dfrac{a}{2}, & x\in\left(\dfrac{a}{2},a\right],\\
-x, & x\in\left[0,\dfrac{a}{2}\right].
\end{cases}
\end{align*}

In order to observe the approximation $\mathbb{P}_{\varepsilon}T^{\varepsilon}\left(x,t\right)$
to $T\left(x,t\right)$, the following steps are carried out:
\begin{enumerate}
\item Find the final values of $T_{\alpha}\left(x,t_{f}\right)$ from the
forward problem which admits the solution expressed as: for $M\in\mathbb{N}$ (for comparison, we take $M=N_\varepsilon$)
\[
T_{\alpha}\left(x,t\right)=\sum_{n=0}^{M}\tilde{C}_{\alpha n}e^{-\bar{\lambda}_{n}t}\phi_{\alpha n}\left(x\right),
\]

and to find $\tilde{C}_{\alpha n}$, we arrive at
\[
T_{\alpha}\left(x,t_{0}\right)=\sum_{n=0}^{M}\tilde{C}_{\alpha n}\phi_{\alpha n}\left(x\right).
\]
 
\item Obtain the values $T_{\alpha}^{\varepsilon}\left(x,t_{f}\right)$
from those values of $T_{\alpha}\left(x,t_{f}\right)$ by perturbing them, as follows:
\begin{equation*}
T^{\varepsilon}_{\alpha} (x_j,t_f) = T_\alpha(x_j,t_f) +\text{rand}\left(x_{j}\right) \varepsilon = \sum^M_{n=0}\tilde{C}_{\alpha n}e^{-\bar{\lambda}_n t_f}\phi_{\alpha n}(x_j) + \text{rand}\left(x_{j}\right) \varepsilon,
\end{equation*}
with $\left|\text{rand}\left(x_{j}\right)\right|\le (2b)^{-1/4}$ in order to fulfill \eqref{terminalcondition}.

\item {Pass the resulting values to the approximate
solution $\mathbb{P}_{\varepsilon}T^{\varepsilon}\left(x,t\right)$ in accordance with \eqref{approx} to find back the initial data $T_{\alpha}^{\varepsilon}\left(x,t_{0}\right)$.}

\end{enumerate}

Similar steps are also used in Example \ref{subsec:Example-3} with the same type of noise. For our way of choosing $M=N_\varepsilon$ in Step 1, this make things convenient since we can both obtain numerical results for the discrete exact solution and approximate solution with the same dimensions.

\begin{figure}
\begin{centering}
\includegraphics[scale=0.8]{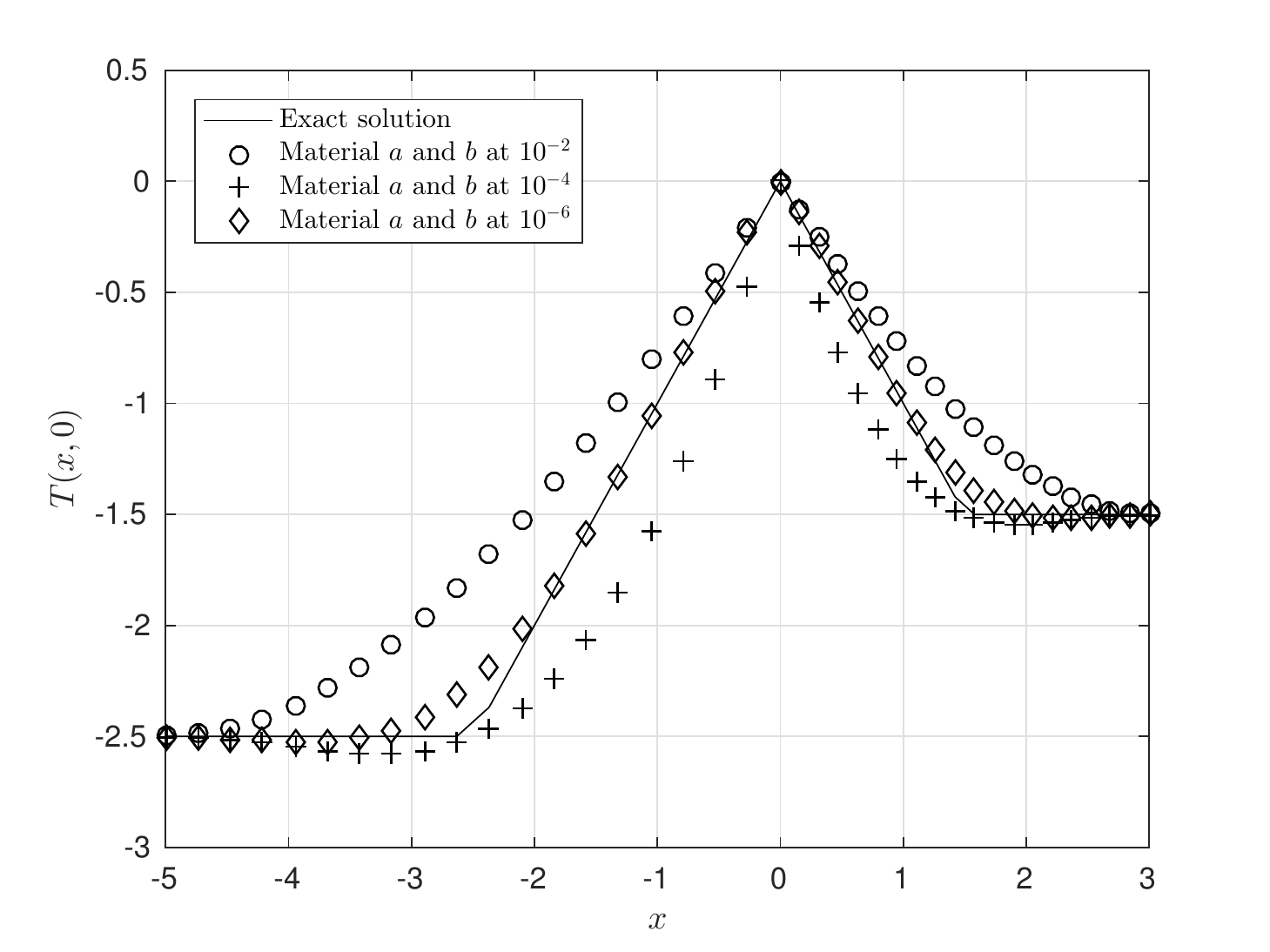}
\par\end{centering}
\caption{The comparison between the (artificial) exact initial data and the computed approximations at $t_{0}=0$ with $\varepsilon\in\left\{ 10^{-2},10^{-4},10^{-6}\right\} $.
(Example \ref{subsec:Example-2})\label{fig:piecewise}}
\end{figure}

\subsection{Example 3}\label{subsec:Example-3}

We finalize this section by testing the example of Parker et al. \cite{Park61}
in which the prescribed initial data are provided by
\[
\begin{cases}
T_{b}\left(x,0\right)=\dfrac{Q}{\rho_{b}c_{b}\sigma}, & x\in\left(-b,0\right),\\
T_{a}\left(x,0\right)=0, & x\in\left(0,a\right),
\end{cases}
\]
where $Q$ is the total heat energy in the initial pulse (cal$\cdot\text{cm}^{-2}$)
and the depth $\sigma$ at the front surface $x=-b$ is performed
as a very small constant compared to the length $a+b$. These two
dimensionless parameters will be selected as $Q=5$ and $\sigma=10^{-3}$.

\begin{figure}
\begin{centering}
\includegraphics[scale=0.8]{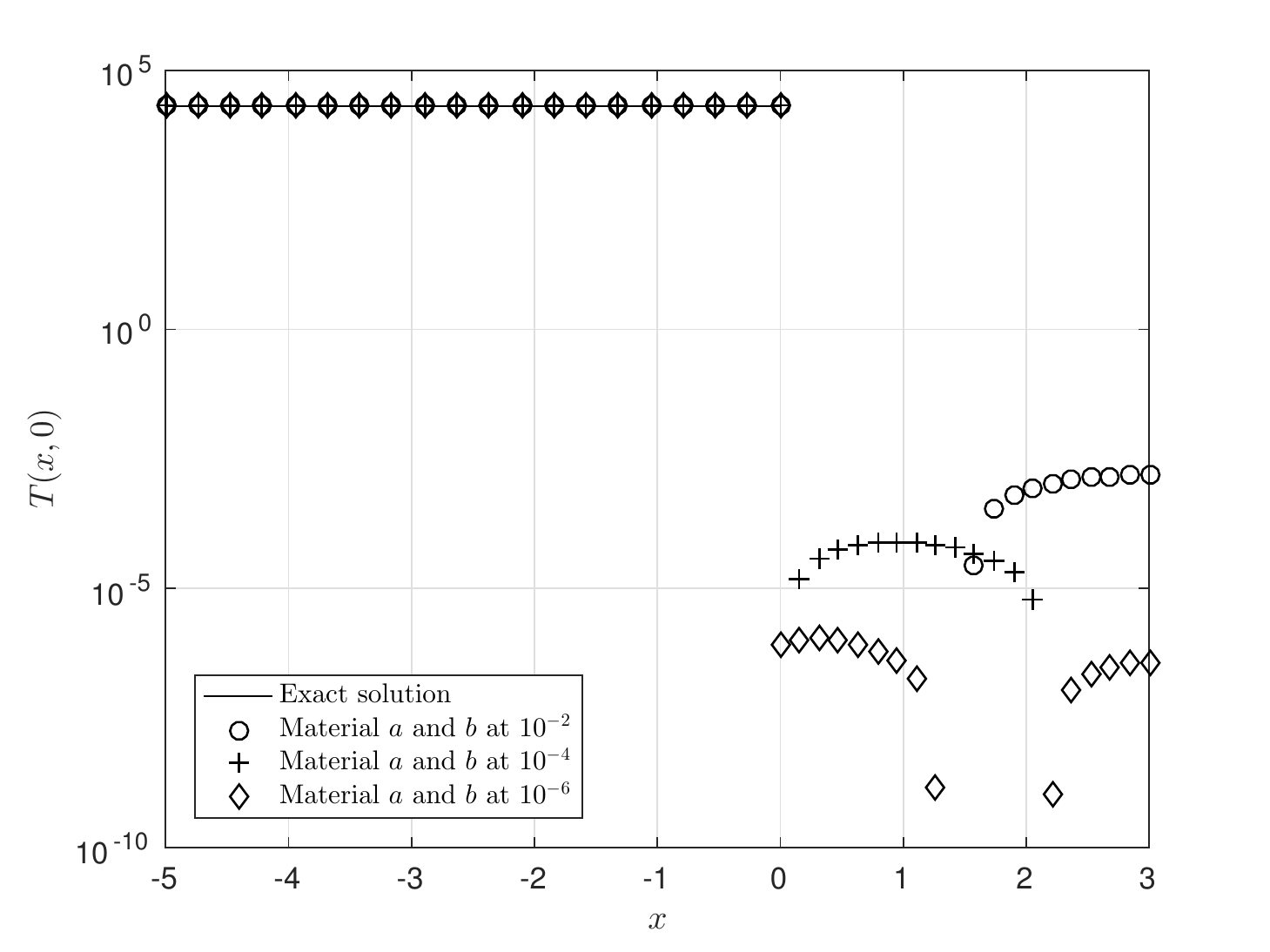}
\par\end{centering}
\caption{The comparison (using log-scale for the $y$-axis) between the (artificial) exact initial data and the computed approximations at $t_{0}=0$ with $\varepsilon\in\left\{ 10^{-2},10^{-4},10^{-6}\right\} $.
(Example \ref{subsec:Example-3})\label{fig:parker}}
\end{figure}

\begin{table}
\begin{centering}
\subfloat[Example \ref{subsec:Example-1}]{\begin{centering}
\begin{tabular}{|c|c|c|c|}
\hline 
$x$ & $\varepsilon=10^{-2}$ & $\varepsilon=10^{-4}$ & $\varepsilon=10^{-6}$\tabularnewline
\hline 
-5 & -0.14623 & -0.14102 & -0.14096\tabularnewline
\hline 
-2.63 & -0.05644 & -0.05122 & -0.05117\tabularnewline
\hline 
1.42 & -0.04657 & -0.04891 & -0.04889\tabularnewline
\hline 
3 & -0.15609 & -0.15843 & -0.15841\tabularnewline
\hline 
\end{tabular}
\par\end{centering}
}
\par\end{centering}
\begin{centering}
\subfloat[Example \ref{subsec:Example-2}]{\begin{centering}
\begin{tabular}{|c|c|c|c|c|}
\hline 
$x$ & $\varepsilon=10^{-2}$ & $\varepsilon=10^{-4}$ & $\varepsilon=10^{-6}$ & Exact\tabularnewline
\hline 
-5 & -2.49806 & -2.50004 & -2.5 & -2.5\tabularnewline
\hline 
-2.63 & -1.83030 & -2.52717 & -2.31534 & -2.5\tabularnewline
\hline 
1.42 & -1.02097 & -1.48 & -1.312359 & -1.42105\tabularnewline
\hline 
3 & -1.49997 & -1.50005 & -1.5 & -1.5\tabularnewline
\hline 
\end{tabular}
\par\end{centering}
}
\par\end{centering}
\begin{centering}
\subfloat[Example \ref{subsec:Example-3}]{\begin{centering}
\begin{tabular}{|c|c|c|c|c|}
\hline 
$x$ & $\varepsilon=10^{-2}$ & $\varepsilon=10^{-4}$ & $\varepsilon=10^{-6}$ & Exact\tabularnewline
\hline 
-5 & 20405.73129 & 20405.72792 & 20405.72792 & 20405.72792\tabularnewline
\hline 
-2.63 & 20405.72961 & 20405.72797 & 20405.72792 & 20405.72792\tabularnewline
\hline 
1.42 & 0.00239 & -2.22E-05 & -3.41E-07 & 0\tabularnewline
\hline 
3 & 0.00159 & 4.53E-05 & 4.26E-07 & 0\tabularnewline
\hline 
\end{tabular}
\par\end{centering}
}
\par\end{centering}
\caption{Numerical results at $t_{0}=0$ with various different points in space.}\label{tab:error}
\end{table}

\subsection{Comments on numerical results}
When approximating the temperature at $t_{0}=0$ for various
amounts of noise $\varepsilon\in\left\{ 10^{-2},10^{-4},10^{-6}\right\} $,
we first observe in Figure \ref{fig:The-instability-illustration} that
the stability of solution ruins drastically for $\varepsilon=10^{-2}$ for Example \ref{subsec:Example-1}. Since in that example we do not have an analytical exact solution, the convergence of the scheme is affirmed by its stability (inspired from the Lax equivalence theorem). Thereby, it can be seen in Figure \ref{fig:Thestability-1} that the approximation by the cut-off projection almost remains
unchanged. For instance, at $(x,t) = (0,0)$ the approximation in the material $a$ for $\varepsilon = 10^{-2}$ yields $0.11787$, whilst the obtained values $\varepsilon = 10^{-4}, 10^{-6}$ are $0.11553
$ and $0.11556$, respectively.  As a result, the stable
approximate solution has been depicted in Figure \ref{fig:Thestability}.

Aside from the first example, the measured final data in the next two examples is established from the artificial initial data by solving the direct problem. Then the approximations can be applied to find back the approximate initial data. 

Figure \ref{fig:piecewise} shows the accuracy of reconstructing the (artificial) exact initial data by the cut-off projection. In fact, we can observe a steady approach of the approximations (nodes) towards the exact solution (black line) as $\varepsilon$ gets from $10^{-2}$ to $10^{-6}$. In line with that, Figure \ref{fig:parker} also confirms a good approximation in the discontinuous case (Example \ref{subsec:Example-3}).

In Table \ref{tab:error}, numerical results with several different spatial points are provided. It is apparent that the numerical solutions in the nonsmooth and discontinuous cases are less stable (accurate) than that of the smooth example. It is simply because of the well-known Gibbs phenomenon, and that the reconstructed data near the nonsmooth and discontinuous points are not well-approximated. Nevertheless, taking into consideration the approximate values in Table \ref{tab:error} at the boundaries as well as the instability in Example \ref{subsec:Example-1} (Figure \ref{fig:The-instability-illustration}), the results in Example \ref{subsec:Example-2} and Example \ref{subsec:Example-3} are acceptable.

\section{Discussion}\label{sec:5}

\subsection{Non-homogeneous case}
We have successfully applied the cut-off method to solve the
heat transfer problem backward in time in a 1D two-slab system with perfect contact. 
%The problem with perfect contact is proved to be unstable due to the intrinsic enlargement of the eigenvalues. 
This system can be solved similarly when the heat source or sink is
applied to the PDE.  Indeed, let $F_{\alpha}:=F_{\alpha}\left(x,t\right)\in L^{2}\left(-b,a\right)$
represent the heat source or sink.  The system  \eqref{eq:pt1}-\eqref{eq:pt2} now reads
\begin{equation}
\rho_{b}c_{b}\frac{\partial T_{b}}{\partial t}=K_{b}\frac{\partial^{2}T_{b}}{\partial x^{2}} + F_b,\quad-b<x<0,\; t\in J,\label{eq:pt1F}
\end{equation}
\begin{equation}
\rho_{a}c_{a}\frac{\partial T_{a}}{\partial t}=K_{a}\frac{\partial^{2}T_{a}}{\partial x^{2}} + F_a,\quad0<x<a,\; t\in J.\label{eq:pt2F}
\end{equation}

As in Section \ref{sec:2}, the system \eqref{eq:pt1F}-\eqref{eq:pt2F} together with the boundary conditions  \eqref{eq:bien1}-\eqref{eq:bien2} also lead to  the transcendental conditions \eqref{eq:canon}-\eqref{eq:con1}, which allow direct computation of the eigenvalues $\lambda_{an},\lambda_{bn}$ and the eigenfunctions $\phi_{an},\phi_{bn}$ of the Sturm-Liouville equation. The temperature solution (in the finite series truncated computational sense) of the system \eqref{eq:pt1F},\eqref{eq:pt2F}, \eqref{eq:bien1}, \eqref{eq:bien2} has the following form:
\begin{eqnarray}\label{sol:nonhomo}
T(x,t)= \sum_{n=0}^N \left[ C_n e^{\overline{\lambda}_n(t_f-t)} - \int_t^{t_f}D_n(s)e^{\overline{\lambda}_n(s-t)}ds\right] \tilde{\phi}_n(x), \qquad (x,t) \in [-b,a]\times [t_0,t_f],
\end{eqnarray}
for some $N \in \mathbb{N}$. Here,  we follow the notation $\overline{\lambda}_n=\kappa_b\lambda^2_{bn}= \kappa_a \lambda^2_{an}$ in Section \ref{sec:3}, whilst the formula of $\tilde{\phi}_n$ has already been introduced in  \eqref{eq:eigen1}-\eqref{eq:eigen2}. The coefficients $C_n$ can be derived as in Section \ref{sec:3} from the final  temperature condition \eqref{finalstate}, while the algorithm for the time-dependent coefficient $D_{\alpha n}$ is
rather difficult and can only be computed when $C_{\alpha n}$ are
known. First of all, let us return to equations
\eqref{eq:pt1F}-\eqref{eq:pt2F} and then after plugging the representation
\eqref{sol:nonhomo} into each quantity, we see that
\begin{align*}
\frac{\partial T_{\alpha}}{\partial t} & =\sum_{n=0}^{N}\left[D_{\alpha n}\left(t\right)+\bar{\lambda}_{n}\left(\int_{t}^{t_{f}}D_{\alpha n}\left(s\right)e^{\bar{\lambda}_{n}\left(s-t\right)}ds-C_{\alpha n}e^{\bar{\lambda}_{n}\left(t_{f}-t\right)}\right)\right]\phi_{\alpha n},\\
\kappa_{\alpha}\frac{\partial^{2}T_{\alpha}}{\partial x^{2}} & =-\sum_{n=0}^{N}\bar{\lambda}_{n}\left(C_{\alpha n}e^{\bar{\lambda}_{n}\left(t_{f}-t\right)}-\int_{t}^{t_{f}}D_{\alpha n}\left(s\right)e^{\bar{\lambda}_{n}\left(s-t\right)}ds\right)\phi_{\alpha n},\\
\frac{\partial T_{\alpha}}{\partial t} & =\kappa_{\alpha}\frac{\partial^{2}T_{\alpha}}{\partial x^{2}}+\frac{1}{\rho_{\alpha}c_{\alpha}}F_{\alpha},
\end{align*}
where we have used the fundamental differentiation under the
integral sign. Therefore, it follows that
\begin{equation}
\sum_{n=0}^{N}D_{\alpha n}\left(t\right)\phi_{\alpha n}\left(x\right)=\frac{1}{\rho_{\alpha}c_{\alpha}}F_{\alpha}\left(x,t\right)\quad\text{for}\;\alpha\in\left\{ a,b\right\} .\label{eq:2ndrepresentation}
\end{equation}
As the second part of work, fix $t$ in \eqref{eq:2ndrepresentation}
and once again, with the nodes $\left\{ x_{j}\right\} _{j=\overline{0,J_{\alpha}}}$
it enables us to formulate the matrix-formed system $\mathbb{A}\mathbb{X}=\mathbb{B}$
then solve it through. Thereby, we have sufficient values for approximating
the integral ${\displaystyle \int_{t}^{t_{f}}D_{n}\left(s\right)e^{\bar{\lambda}_{n}\left(s-t\right)}ds}$,
and that provides the way to numerically compute the temperature $T\left(x,t\right)$
by discrete nodes on the plane consisting of the time and spatial
variables. 

It is worthy noticing that $F_\alpha$ should satisfy the following condition so that \eqref{eq:2ndrepresentation} is solvable :
\[
\frac{1}{\rho_ac_a}\left< F_a(x,t),\phi_{an}(x)\right>_{L^2(0,a)} = \frac{1}{\rho_bc_b}\left< F_b(x,t),\phi_{bn}(x)\right>_{L^2(-b,0)} \qquad \mbox{for all} \; n \in \mathbb{N}.
\]
The above condition is a necessary condition for $F_\alpha$. We do not aim at finding the sufficient condition to guarantee the existence of solution of the system  \eqref{eq:pt1F}-\eqref{eq:pt2F} in this manuscript.

 \subsection{The bilayer system in 2D}

%If we assume furthermore that $F_\alpha$ is piecewise smooth, so that we can express it as $F_\alpha (x,t) \approx \sum_{n=0}^N F_{\alpha n}(t) \phi_{\alpha n}(x)$. 
In this subsection, we generalize the homogeneous problem to two dimensional space. We consider the backward heat transfer problem between two plates: the left plate is the rectangle $\left[-b,0\right]\times\left[0,c\right]$, 
while the right plate is $\left[0,a\right]\times\left[0,c\right]$. The heat distribution in the time interval $\left[t_{0},t_f\right]$
is described by:
\begin{equation}
\rho_{b}c_{b}\frac{\partial T_{b}}{\partial t}=K_{b}\Delta T_{b},\quad\left(x,y\right)\in\left(-b,0\right)\times\left(0,c\right),\; t\in J,\label{eq:pt1-1}
\end{equation}
\begin{equation}
\rho_{a}c_{a}\frac{\partial T_{a}}{\partial t}=K_{a}\Delta T_{a},\quad\left(x,y\right)\in\left(0,a\right)\times\left(0,c\right),\; t\in J.\label{eq:pt2-1}
\end{equation}

Unlike the 
problem \eqref{eq:pt1}-\eqref{eq:pt2}, we have the boundary conditions
on each side of the rectangle and also at the intersection of them,
i.e. $\left(x,y\right)\in\left\{ 0\right\} \times\left[0,c\right]$,
\begin{equation}
\frac{\partial T_{b}}{\partial x}=0\quad\mbox{at}\;x=-b,\quad\frac{\partial T_{a}}{\partial x}=0\quad\mbox{at}\;x=a,\; t\in J, \label{eq:bien1-1}
\end{equation}
\[
\frac{\partial T_{b}}{\partial y}=0\quad\mbox{at}\;\left(x,y\right)\in\left[-b,0\right]\times\left\{ 0,c\right\} ,\quad\frac{\partial T_{a}}{\partial y}=0\quad\mbox{at}\;\left(x,y\right)\in\left[0,a\right]\times\left\{ 0,c\right\} , \; t\in J,
\]
\begin{equation}
T_{b}=T_{a},\quad K_{b}\frac{\partial T_{b}}{\partial x}=K_{a}\frac{\partial T_{a}}{\partial x}\quad\mbox{at}\;\left(x,y\right)\in\left\{ 0\right\} \times\left[0,c\right], \; t\in J.\label{eq:bien2-1}
\end{equation}

Using the same approach as in Section \ref{sec:2}, we can construct the eigen-elements as the solution of 
\begin{equation}
\Delta\phi_{\alpha}\left(x,y\right)+\lambda^{2}_\alpha\phi_{\alpha}\left(x,y\right)=0\quad\text{for }\alpha\in\left\{ a,b\right\} ,\label{eq:4.5}
\end{equation}
 with the assumption that the time dependence of each plates is the
same, and that $\phi_\alpha$ inherits the boundary conditions \eqref{eq:bien1-1}-\eqref{eq:bien2-1}. 

We apply the separation
variable technique for \eqref{eq:4.5} by assuming that $\phi_{\alpha}\left(x,y\right)$
can be expressed as $\phi_{\alpha}\left(x,y\right)=X_{\alpha}\left(x\right)Y_{\alpha}\left(y\right)$.
Then, \eqref{eq:4.5} becomes
\begin{equation}
\dfrac{X_{\alpha}''}{X_{\alpha}}+\lambda^{2}_\alpha=-\dfrac{Y_{\alpha}''}{Y_{\alpha}}=\mu^{2}_\alpha.\label{eq:4.7}
\end{equation}

Direct computation yields that $Y_\alpha (y) = \eta_\alpha \cos(\mu_\alpha y)$, where $\eta_\alpha = \sqrt{\dfrac{2}{c}}$ is a constant and $\mu_\alpha=\mu =\dfrac{m\pi}{c}$ for  $m\in \mathbb{N}$. Since we are solely interested in the case $\lambda^2_\alpha > \mu^2_\alpha$, we can assume that $\nu^{2}_\alpha=\lambda^{2}_\alpha-\mu^2_\alpha$. In the same manner with Section \ref{sec:2}, we obtain
\[
X_{a}=\theta_{a}\cos\left(\nu_{a}\left(x-a\right)\right);\quad X_{b}=\theta_{b}\cos\left(\nu_{b}\left(x+b\right)\right),
\]
where $\theta_a,\theta_b$ are constants. The transcendental condition \eqref{eq:canon} now reads
\begin{equation}\label{eq:5.11}
K_{b}\nu_{b}\sin\left(\nu_{b}b\right)\cos\left(\nu_{a}a\right)+K_{a}\nu_{a}\sin\left(\nu_{a}a\right)\cos\left(\nu_{b}b\right)=0.
\end{equation}

Similarly, a 2D version of condition \eqref{eq:con1} is
\begin{equation}\label{eq:5.12}
\kappa_{a}\left(\nu_{a}^{2}+\mu_{a}^{2}\right)=\kappa_{b}\left(\nu_{b}^{2}+\mu_{b}^{2}\right).
\end{equation}
The temperature solution of the system \eqref{eq:pt1-1}-\eqref{eq:pt2-1} together with the boundary conditions \eqref{eq:bien1-1}-\eqref{eq:bien2-1} is
\[
T_{\alpha}\left(x,y,t\right)=\sum_{m,n\geq0}C_{\alpha mn}e^{\kappa_{\alpha}\left(\nu_{\alpha n}^{2}+\mu_{\alpha m}^{2}\right)\left(t_f-t\right)}X_{\alpha n}\left(x\right)Y_{\alpha m}\left(y\right),
\]
where the coefficient $C_{\alpha mn}$ can be derived from the temperature distribution at the final time:
\[
T_\alpha (x,y,t_f)=\sum_{m,n\geq0}C_{\alpha mn}X_{\alpha n}\left(x\right)Y_{\alpha m}\left(y\right).
\]

For the time being, the algorithm proposed in Section \ref{sec:2} can be applied again in searching for the eigenvalues $\left\{\nu_{\alpha n}\right\}_{n\in \mathbb{N}}$ which are solutions to \eqref{eq:5.11} and \eqref{eq:5.12}. Note from \eqref{eq:5.12} that if the thermal diffusivities of two plates are identical (i.e. $\kappa_{a} = \kappa_{b}$), it turns back to solving the system \eqref{eqn:lambda_b}-\eqref{eqn:lambda_a}.

Applying again the cut-off projection, our regularized solution in the 2D two-slab system is given by
\[
T^{\varepsilon}(x,y,t) = \sum_{m,n \in \Theta(\varepsilon)} C_{mn}e^{\bar{\lambda}_{mn}(t_f - t)}X_{n}(x)Y_{m}(y),
\]
associated with the terminal measured data $T_{\alpha}^{\varepsilon}\left(x,y,t_f\right)$ satisfying
\begin{equation}
\left\Vert T_{a}\left(\cdot,t_{f}\right)-T_{a}^{\varepsilon}\left(\cdot,t_{f}\right)\right\Vert _{L^{2}\left((0,a)\times (0,c)\right)}+\left\Vert T_{b}\left(\cdot,t_{f}\right)-T_{b}^{\varepsilon}\left(\cdot,t_{f}\right)\right\Vert _{L^{2}\left((-b,0)\times(0,c)\right)}\le\varepsilon.\label{terminalcondition2}
\end{equation}

Here, the set of admissible frequencies is defined by $\Theta(\varepsilon):=\left\{ (m,n)\in\mathbb{N}^2:\bar{\lambda}_{mn}\le N_{\varepsilon}\right\} $ with $\bar{\lambda}_{mn} := \kappa_{a}\left(\nu_{an}^{2}+\mu_{am}^{2}\right) = \kappa_{b}\left(\nu_{bn}^{2}+\mu_{bm}^{2}\right)$.

Taking again $\kappa_{\alpha} \in \left\{0.838,0.339\right\}$ and $K_{\alpha} \in \left\{3.42,1.05\right\}$ as in Section \ref{sec:4}, we test the cut-off projection with a simple 2D problem in a normalized square two-slab system, i.e. $a=b=c = 1$, and with $t_f = 0.1$, $\beta = 0.01$, $\gamma = 1$.

In this test, since the values of thermal diffusivities are known, we are allowed to write from \eqref{eq:5.12} that
\begin{equation}\label{eq:5.14}
	\nu_{a} = \sqrt{\frac{\kappa_{b}}{\kappa_{a}} \nu_{b}^2 + \left(\frac{\kappa_{b}}{\kappa_{a}} -1 \right)\mu^2 },
\end{equation}
and thus get the transcendental condition in \eqref{eq:5.11} that
\begin{align}\label{eq:5.15}
& K_{b}\nu_{b} \sin(\nu_{b}b)\cos\left(a\sqrt{\frac{\kappa_{b}}{\kappa_{a}} \nu_{b}^2 + \left(\frac{\kappa_{b}}{\kappa_{a}} -1 \right)\mu^2 } \right) 
\nonumber\\
&+ K_{a}\sqrt{\frac{\kappa_{b}}{\kappa_{a}} \nu_{b}^2 + \left(\frac{\kappa_{b}}{\kappa_{a}} -1 \right)\mu^2 }\sin\left( a \sqrt{\frac{\kappa_{b}}{\kappa_{a}} \nu_{b}^2 + \left(\frac{\kappa_{b}}{\kappa_{a}} -1 \right)\mu^2 }\right)\cos(\nu_{b}b) = 0.
\end{align}

As we can see from \eqref{eq:5.15} , the eigenvalues cannot be computed directly. In order to depict the behavior of those eigenvalues, we demonstrate a simple numerical example by giving an exact smooth form of initial data, viz.
\begin{align*}
T_{b}\left(x,y,0\right) =\cos\left(\pi\left(x+b\right)\right)\cos\left(\pi y\right), &\quad
T_{a}\left(x,y,0\right) =\cos\left(\pi\left(x-a\right)\right)\cos\left(\pi y\right).
\end{align*}

Notice that these are the initial values for the forward problem. Following the steps mentioned in Subsection \ref{subsec:Example-2}, we set the limit of our summation in the expression of Fourier form solution, i.e. $(m,n)\in \Theta(\varepsilon)$ and calculate the eigenvalues according to \eqref{eq:5.14} and \eqref{eq:5.15}. In Figure \ref{fig:example2D-1}, we compute several numerical eigenvalues driven by \eqref{eq:5.14} and \eqref{eq:5.15} by using the built-in function \texttt{fzero} from MATLAB (this has already been introduced in Section \ref{sec:2}).

%We then add perturbation to our approximated forward solution at terminal time $t=tf$ and establish our backward problem with final data
%
%\begin{align*}
%T_{\alpha}^\varepsilon \left(x_i,y_j,tf\right) & =T_{\alpha} \left(x_i,y_j,tf\right) + rand\left(x_i, y_j\right) \varepsilon,
%\end{align*}
%with the value of random generator is $\left| rand\left(x_i, y_j\right) \right| \leq \left( 2\sqrt{c} \left(\sqrt{a} +\sqrt{b} \right) \right)^{-1}$, which satisfies condition \ref{terminalcondition} by simple calculation.
%
%Our final step is to trace back to the initial value at $t=0$ with the backward problem. In here, our solution is regularized by using truncation method, restricting the indexes $m,n$ in a certain range, i.e. $m,n\in \Theta(\varepsilon):=\left\{ n,m\in\mathbb{N}:\bar{\lambda}_{nm}\le N_{\varepsilon}\right\} $
% Preview source code for paragraph 0

In this example we consider the system at $y_0=0$. The algorithm for this 2D case is similar to that for the 1D case (see in Section \ref{sec:4}). Basically, the following steps are carried out:
\begin{enumerate}
\item Find the final values from the forward problem
\[	T_\alpha(x,y_0,t) = \sum_{m,n \in \Theta(\varepsilon)}\tilde{C}_{\alpha mn}e^{-\bar{\lambda}_{n}t}X_{\alpha n}(x)Y_m(y_0), \]
and to find $\tilde{C}_{\alpha mn}$, we solve a linear system $\mathbb{A}\mathbb{X} = \mathbb{B}$ where
\[
\mathbb{A} = 
\begin{bmatrix}
	X_{\alpha 0}(x_0)Y_0(y_0) & X_{\alpha 0}(x_0)Y_1(y_0) & \ldots & X_{\alpha n}(x_0)Y_m(y_0)\\
	X_{\alpha 0}(x_1)Y_0(y_0) & X_{\alpha 0}(x_0)Y_1(y_0) & \ldots & X_{\alpha n}(x_1)Y_m(y_0)\\
	\vdots & \vdots & \ddots & \vdots\\
	X_{\alpha 0}(x_{J_\alpha})Y_0(y_0) & X_{\alpha 0}(x_{J_\alpha})Y_1(y_0) & \ldots & X_{\alpha n}(x_{J_\alpha})Y_m(y_0)
\end{bmatrix} \in \mathbb{R}^{(J_{\alpha}+1)^2},
\]
\[
\mathbb{X} =
\begin{bmatrix}
	\tilde{C}_{\alpha 00}\\
	\tilde{C}_{\alpha 01}\\
	\vdots\\
	\tilde{C}_{\alpha nm}
\end{bmatrix} \in \mathbb{R}^{J_{\alpha} + 1}, \quad
\mathbb{B} =
\begin{bmatrix}
	T_\alpha(x_0,y_0,t_0)\\
	T_\alpha(x_1,y_0,t_0)\\
	\vdots\\
	T_\alpha(x_{J_\alpha},y_0,t_0)
\end{bmatrix}\in \mathbb{R}^{J_{\alpha} + 1}.
\]
In this case we choose $J_\alpha = \left| \Theta(\varepsilon) \right| -1$ where $\left| \Theta(\varepsilon) \right|$ is  the cardinality of  $\Theta(\varepsilon)$.
\item Obtain the values $T_{\alpha}^{\varepsilon}\left(x,y_0,t_{f}\right)$
from those values of $T_{\alpha}\left(x,y_0,t_{f}\right)$ by perturbing them, as follows:
\begin{align*}
T^{\varepsilon}_{\alpha} (x_j,y_0,t_f) = T_\alpha(x_j,y_0,t_f) +\text{rand}\left(x_{j}\right) \varepsilon
& = \sum_{m,n \in \Theta(\varepsilon)}\tilde{C}_{\alpha mn}e^{-\bar{\lambda}_n t_f}X_{\alpha n}(x)Y_m(y_0) + \text{rand}\left(x_{j}\right) \varepsilon,
\end{align*}
with $\left|\text{rand}\left(x_{j}\right)\right|\le ((a+b)c)^{-1/2}$.
\item Perform the proposed method on generated data to find back the initial data $T_{\alpha}^{\varepsilon}\left(x,y_0,t_0\right)$.
\end{enumerate}

\begin{figure}[!h]
	\subfloat[$\lambda_{a}^2$]{\includegraphics[scale=0.6]{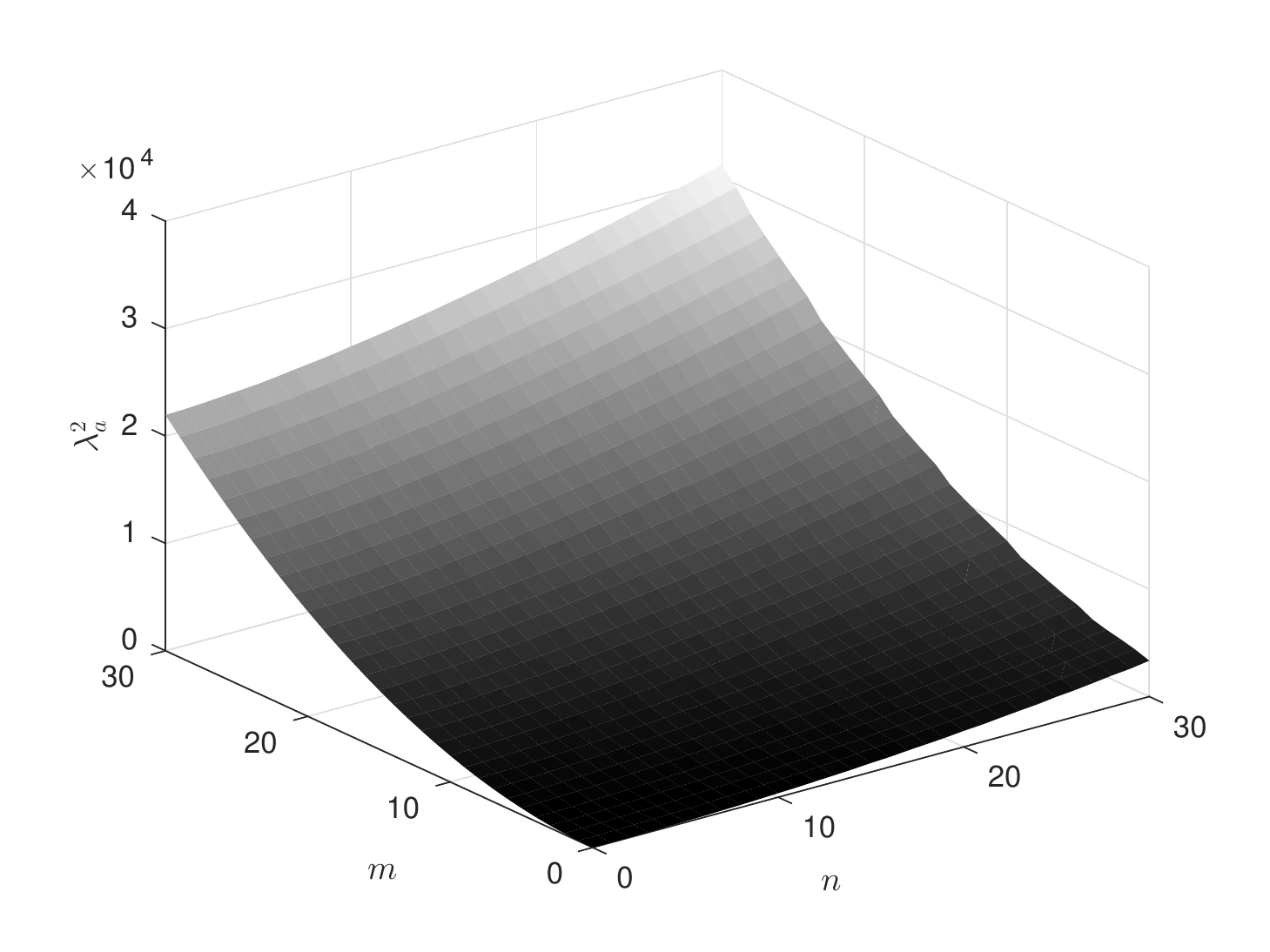}
		
	}\subfloat[$\lambda_{b}^2$]{\includegraphics[scale=0.6]{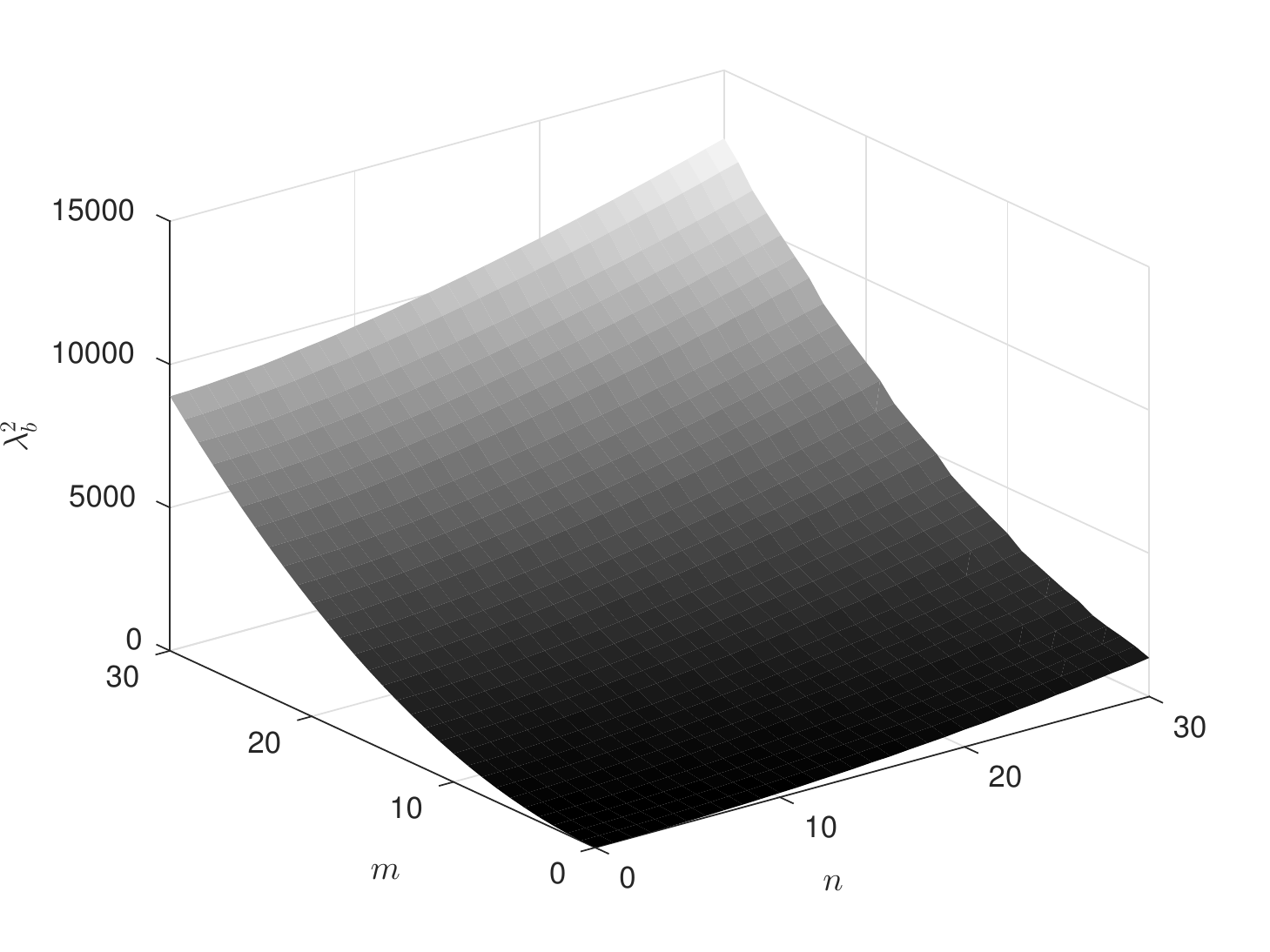}
	}
	\caption{Numerical eigenvalues $\lambda_{\alpha}^{2}=\nu_{\alpha}^2 + \mu_{\alpha}^2$ for the two-slab system in 2D at $\varepsilon = 10^{-6}$.\label{fig:example2D-1}}
\end{figure}

\begin{figure}[!h]
	\includegraphics[scale=0.8]{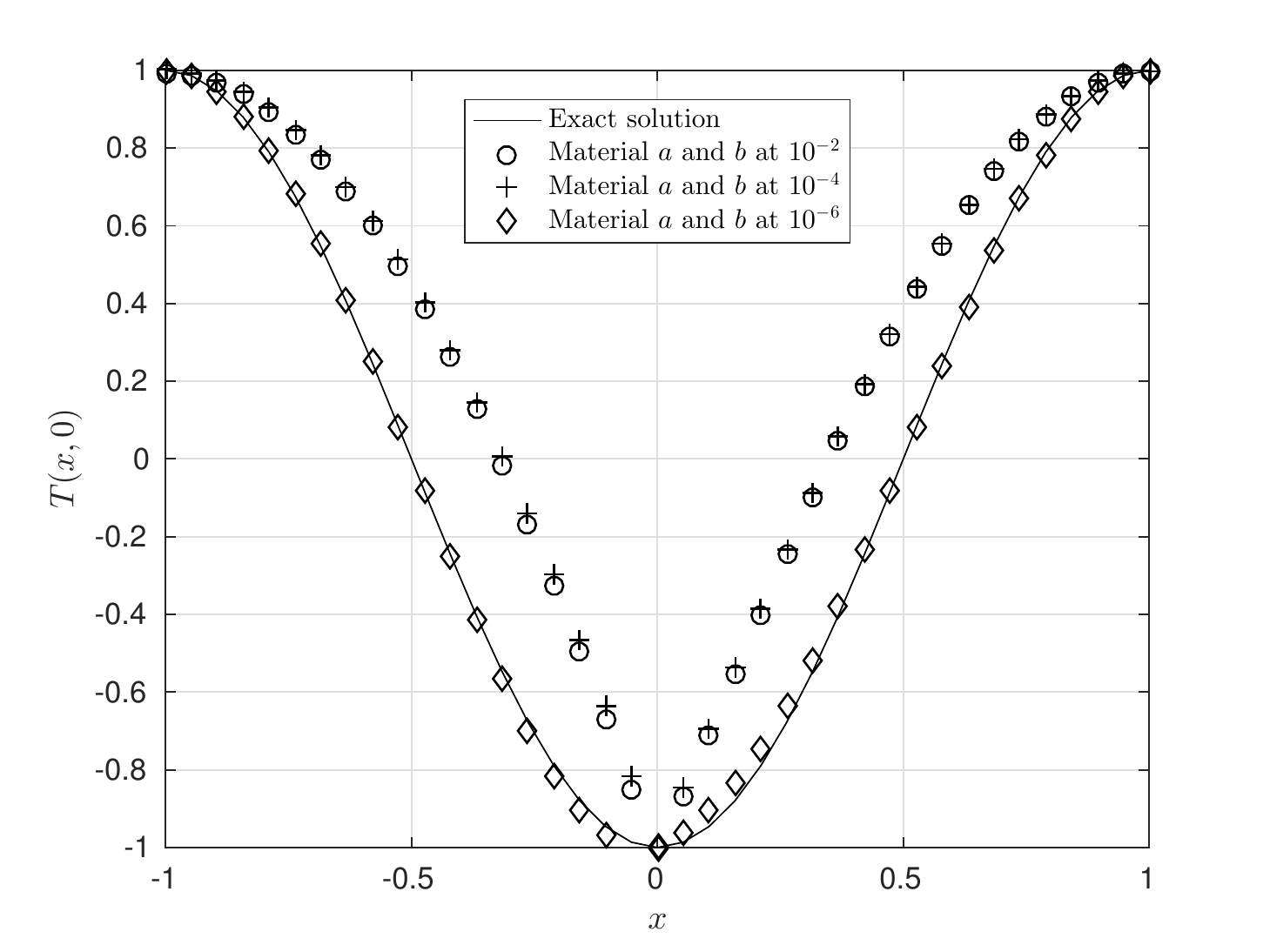}
	\caption{The comparison between the (artificial) exact initial data and the computed approximations at $y_0 = 0, t_{0}=0$ with $\varepsilon\in\left\{ 10^{-2},10^{-4},10^{-6}\right\} $.
	(Example for a two-slab system in 2D).\label{fig:example2D-2}}
\end{figure}

We have drawn in Figure \ref{fig:example2D-2} the numerical results of the exact initial data and the computed approximations based on our proposed cut-off projection at $y_0 = 0$ and at $t_0 = 0$. We can easily observe that these approximations (colored nodes) are closer to the exact initial data (black line) as $\varepsilon$ goes from $10^{-2}$ to $10^{-6}$. This corroborates that our theoretical analysis works well in this 2D example and thus implies the applicability of the proposed method in higher dimensions. 

\subsection{Boundary conditions}
So far, we merely have considered the two-slab problem under idealistic conditions, such as the system has a perfect contact, there is no heat entering or escaping at the boundary of the two slabs. More realistic conditions should be more interesting, for example, an imperfect contact or Robin-type  condition motivated by Newton's Law of Cooling. However, the more complicated boundary conditions, the more severe difficulties of computation we have, especially for the eigen-elements. We shall leave detailed studies of such realistic models for further research.

\subsection*{Acknowledgments}
V.A.K. acknowledges Prof. Errico Presutti (GSSI, Italy) for his invaluable help during the PhD years. The authors wish to express their sincere thanks to the anonymous referees and the associate editors for many constructive comments leading to the improved versions of this paper. This work is supported by the Institute for Computational Science and Technology (Ho Chi Minh city) under project named ``Some ill-posed  problems for partial differential equations''.

\bibliographystyle{plain}
\bibliography{mybib}

\end{document}